\numberwithin{equation}{section}
\newtheorem{theorem}{Theorem}[section]
\newtheorem{lemma}[theorem]{Lemma}
\newtheorem{proposition}[theorem]{Proposition}
\theoremstyle{definition}
\theoremstyle{definition}
\newtheorem{remark}[theorem]{Remark}
\title{Symmetry and quantitative stability \\
for the parallel surface fractional torsion problem}
\date{}
\author[1]{Giulio Ciraolo}
\author[2]{Serena Dipierro}
\author[2]{Giorgio Poggesi}
\author[1]{\authorcr Luigi Pollastro}
\author[2]{Enrico Valdinoci}
\affil[1]{\footnotesize Departimento di Matematica, \protect\\ Universit\`a di Milano, \protect\\ Via Cesare Saldini 50, \protect\\ Milan, I-20133, Italy,\protect\\ {\tt giulio.ciraolo@unimi.it}, \protect\\ {\tt luigi.pollastro@unimi.it}\bigskip}
\affil[2]{\footnotesize Department of Mathematics and Statistics, \protect\\
University of Western Australia, \protect\\
35 Stirling Highway, \protect\\
Crawley, Perth, WA 6009, Australia, \protect\\
{\tt serena.dipierro@uwa.edu.au}, \protect\\ {\tt giorgio.poggesi@uwa.edu.au}, \protect\\ {\tt enrico.valdinoci@uwa.edu.au}}
\newcommand{\rrr}{r}
\begin{document}
\maketitle

\begin{abstract}
We study symmetry and quantitative approximate symmetry for an overdetermined problem involving the fractional torsion problem in a bounded open set $\Omega \subset \mathbb R^n$. More precisely, we prove that if the fractional torsion function has a $C^1$ level surface which is parallel to the boundary $\partial \Omega$ then $\Omega$ is a ball.  If instead we assume that the solution is \emph{close} to a constant on a parallel surface to the boundary, then we quantitatively prove that $\Omega$ is \emph{close} to a ball. 
 Our results use techniques which are peculiar of the nonlocal case as, for instance, quantitative versions of fractional Hopf boundary point lemma and boundary Harnack estimates for antisymmetric functions. 
We also provide an application to the study of rural-urban fringes in population settlements.
\end{abstract}

\section{Introduction}

In the present paper we study an overdetermined problem involving the fractional Laplacian $(-\Delta)^s$, with~$s \in (0,1)$, which is defined for $u \in C^{\infty}_c (\mathbb{R}^n)$ as
\begin{equation*}
    (-\Delta)^s u(x) := c_{n,s} \, P.V. \int_{\mathbb{R}^n} \frac{u(x) - u(z)}{|x-z|^{n+2s}} dz,
\end{equation*} 
where
\begin{equation*}
    c_{n,s} = s \, (1-s) \, 4s \pi^{-n/2} \frac{\Gamma(n/2 + s)}{\Gamma(2-s)} 
\end{equation*}
(see for example \cite{di2012hitchhiker}).

Let $G$ be a smooth and bounded domain\footnote{In our notation,
``domain'' just means ``open set'', without any connectedness assumption.} in $\mathbb{R}^n$. We denote by $B_R$ the ball of radius $R > 0$ centered at the origin and let $\Omega$ be the ``Minkowski sum of $G$ and $B_R$'', namely
\begin{equation}\label{IIGR}
    \Omega := G + B_R := \{ x + y \ | \ x \in G, \, |y| < R \}.
\end{equation}
Our main goal is to study symmetry and quantitative stability properties for the fractional torsion problem
\begin{equation}
\label{s1eq4}
\begin{cases}
(-\Delta)^s u = 1 \quad & \textmd{in} \ \Omega,\\
u = 0 \quad &\textmd{in} \  \mathbb{R}^n \setminus \Omega,
\end{cases}
\end{equation}
with the overdetermined condition 
\begin{equation}
\label{s1eq5}
u = c \quad \textmd{on} \ \partial G.
\end{equation}
The overdetermined problem \eqref{s1eq4}-\eqref{s1eq5} was firstly studied in \cite{MagnaniniSakaguchi} for the classical Laplace operator and it was motivated by the study of invariant isothermic surfaces of a nonlinear nondegenerate fast diffusion equation. Later, in \cite{CMS} and \cite{ciraolo2016solutions} symmetry and quantitative approximate symmetry results were studied for more general operators. See also~\cite{MR2916825}
for related symmetry results
regarding the parallel surface problem.

In this manuscript we consider the nonlocal counterpart of this setting. Namely, on the one hand, by Lax-Milgram Theorem,
problem \eqref{s1eq4} admits a solution. On the other,
it is not clear whether or not a solution of~\eqref{s1eq4} exists
that also satisfies~\eqref{s1eq5}. This is a classical question in the realm of overdetermined problems and typically one can prove that a solution exists if and only if the domain satisfies some symmetry. In this context, our first main result is the following.

\begin{theorem}
\label{theorem1}
Let $G$ be an open bounded set of $\mathbb{R}^n$ with $\partial G$ of class $C^1$ and set $\Omega := G + B_R$, for some $R>0$. There exists a solution $u \in C^s(\overline{\Omega})$ of \eqref{s1eq4} satisfying the additional condition \eqref{s1eq5} if and only if $G$ (and therefore $\Omega$) is a ball.
\end{theorem}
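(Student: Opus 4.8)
The "if" direction is the routine one: if $G = B_\rho$ is a ball, then $\Omega = B_{\rho+R}$ is a ball, and the torsion function on a ball is radial (by uniqueness of the solution to \eqref{s1eq4}, which follows from Lax--Milgram, together with the rotational invariance of $(-\Delta)^s$), hence constant on the parallel surface $\partial G = \partial B_\rho$. So \eqref{s1eq5} holds automatically for some constant $c$.

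The substantive content is the "only if" direction, and here the plan is to run a \emph{moving planes} argument adapted to the nonlocal setting, in the spirit of the Serrin-type symmetry results for $(-\Delta)^s$ (cf.\ the method of moving planes for antisymmetric functions). Fix a direction $e \in \mathbb S^{n-1}$; for $\lambda \in \mathbb R$ let $T_\lambda = \{x\cdot e = \lambda\}$, let $\Omega_\lambda$ be the cap $\Omega \cap \{x\cdot e > \lambda\}$, and let $x_\lambda$ denote reflection across $T_\lambda$. Start the planes from $+\infty$ and decrease $\lambda$ until $T_\lambda$ first touches $\partial\Omega$; then keep decreasing. Set $w_\lambda(x) = u(x_\lambda) - u(x)$, which is antisymmetric with respect to $T_\lambda$ and satisfies a linear equation $(-\Delta)^s w_\lambda = 0$ in the reflected cap (the right-hand side $1$ cancels). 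One shows, using the maximum principle for antisymmetric functions in the nonlocal framework, that $w_\lambda \ge 0$ in the cap for $\lambda$ close to the extreme value, and then that the set of such $\lambda$ is open and closed until one reaches a \emph{critical} plane $T_{\lambda^*}$ where the moving cannot proceed. At $T_{\lambda^*}$ one of two things happens: either the reflected cap becomes internally tangent to $\partial\Omega$ at a point not on $T_{\lambda^*}$, or $T_{\lambda^*}$ is orthogonal to $\partial\Omega$ at some point. In either case a Hopf-type boundary point lemma for $(-\Delta)^s$ applied to $w_{\lambda^*}$ (which vanishes at that boundary point) forces $w_{\lambda^*} \equiv 0$, i.e.\ $\Omega$ is symmetric about $T_{\lambda^*}$. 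Crucially the overdetermined condition \eqref{s1eq5} is what makes the argument close: without it one only gets one-sided information. The key role of the parallel-surface structure $\Omega = G + B_R$ is that the constant level set $\partial G$ is itself reflected correctly (since $\partial\Omega$ and $\partial G$ are parallel, the reflection respects both), so the overdetermination on $\partial G$ can be transferred to a comparison on $\partial\Omega$; concretely, along the critical plane one compares $u$ on $\partial\Omega$ with $u$ on the reflected boundary and uses that $u \equiv c$ on $\partial G$ to rule out strict inequality.

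Having symmetry about a critical hyperplane in \emph{every} direction $e$, one concludes in the standard way: the intersection of all such hyperplanes of symmetry is a single point (the common center), and a domain symmetric under reflection across every hyperplane through a fixed point is a ball centered there. Hence $\Omega = B_{R'}$ for some $R' > 0$, and then $G = \Omega - B_R$ (the inner parallel body at distance $R$) is $B_{R'-R}$, a ball.

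The main obstacle I expect is making the nonlocal moving-plane machinery rigorous on a domain that is only $C^1$ (not $C^2$): the Hopf boundary point lemma and the antisymmetric maximum principle for $(-\Delta)^s$ need enough boundary regularity and enough control on $u$ near $\partial\Omega$ (the $C^s(\overline\Omega)$ bound is given, but one typically wants the sharp $u/\delta^s$ behavior). The parallel-surface hypothesis $\Omega = G+B_R$ helps here because $\Omega$ automatically satisfies a uniform interior ball condition of radius $R$, which is exactly the geometric input one needs to run the fractional Hopf lemma and the boundary Harnack estimates for antisymmetric functions cleanly; this is presumably why the authors phrase the whole problem in terms of the Minkowski sum. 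A secondary technical point is justifying that the nonlocal comparison can be started at all (the "$\lambda$ near the maximum" step), which in the nonlocal setting is slightly more delicate than locally because the equation for $w_\lambda$ in the cap has a nonlocal "error" term coming from the part of $u$ outside the cap, and one must check it has the favorable sign.
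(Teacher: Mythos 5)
Your overall strategy (moving planes applied to the antisymmetric difference, weak and strong maximum principles for antisymmetric supersolutions, and a Hopf-type estimate at the critical position) is the same as the paper's, and your "if" direction and the final step (symmetry in every direction implies a ball) are fine. The gap is in how you close the argument at the critical plane. You assert that "in either case a Hopf-type boundary point lemma applied to $w_{\lambda^*}$ (which vanishes at that boundary point) forces $w_{\lambda^*}\equiv 0$." That is the mechanism of Serrin's original problem, where the Neumann overdetermination on $\partial\Omega$ contradicts the Hopf inequality at the touching point. Here there is no Neumann condition on $\partial\Omega$: the overdetermination \eqref{s1eq5} lives on the interior surface $\partial G$, so a Hopf lemma at the tangency point $P_0\in\partial\Omega$ gives a lower bound on the boundary growth of $v=u-u\circ Q$ there but supplies nothing to contradict it with.

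The correct closing exploits the Minkowski-sum structure differently in the two cases, and only one of them uses a Hopf-type statement. In Case 1 (internal tangency of the reflected cap at a point of $\partial\Omega\setminus T$), the parallel structure produces a point $P\in\partial G$ whose reflection $Q(P)$ also lies on $\partial G$; both are \emph{interior} points of $\Omega$, $P$ lies in the open reflected cap where the strong maximum principle forces $v>0$, and yet $v(P)=u(P)-u(Q(P))=c-c=0$ by \eqref{s1eq5}. No boundary point lemma is involved. In Case 2 (orthogonality of $T$ to $\partial\Omega$), the relevant point is $Q\in\partial G\cap T$, where $v$ vanishes automatically by antisymmetry; condition \eqref{s1eq5} forces the tangential derivative $\partial_e v(Q)$ to vanish, and the contradiction comes from the strict Hopf-type inequality $\partial_e v(Q)<0$ for antisymmetric $s$-harmonic functions that are positive on one side of $T$. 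That inequality is the genuinely new technical ingredient of the proof: the paper derives it from a boundary Harnack estimate on a half-ball for antisymmetric $s$-harmonic functions (Lemma \ref{s3lemma1}), proved via the fractional Poisson kernel, and your sketch does not supply any substitute for it. Relatedly, your worry about $C^1$ regularity of $\partial\Omega$ is misplaced: all the analytic estimates take place at points of $\partial G$, which are at distance $R$ from $\partial\Omega$, so the half-ball on which the Harnack inequality is applied sits well inside $\Omega$; the $C^1$ assumption on $\partial G$ is needed only to run the geometric dichotomy (Case 1 versus Case 2) of the moving planes method.
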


It is clear that one implication of Theorem \ref{theorem1} is trivial. Indeed, given a ball $B = B_r(x_0)$ of radius $r > 0$ and center $x_0 \in \mathbb{R}^n$ we can compute the explicit solution $\psi_B$ of \eqref{s1eq4} with $\Omega = B$ (see for example \cite{dyda2012fractional}), which is given by
\begin{equation}\label{eq:explicit fractional torsion ball}
    \psi_B (x) = \gamma_{n,s} (r^2 - |x - x_0|^2)^s_+ ,
\end{equation}
where
\begin{equation}\label{eq:def constant gamma ns}
\gamma_{n,s} \coloneqq \frac{4^{-s}\Gamma(n/2)}{\Gamma (n/2+s) \Gamma (1+s)} . 	
\end{equation}
Since $\psi_B$ is radial, then condition \eqref{s1eq5} is automatically satisfied for any $G=B_\rho(x_0)$, with $\rho<r$. Therefore, in order to prove Theorem \ref{theorem1} it is enough to show that if $u$ is a solution to \eqref{s1eq4} satisfying \eqref{s1eq5} then $\Omega$ is a ball. In other words, we prove that if a solution of the torsion problem \eqref{s1eq4} has a level set which is parallel to $\partial \Omega$ then the domain is a ball and the solution is radially symmetric. Here we notice that the regularity assumptions required on $\partial G$ are the minimal ones in order to be able to start the moving planes procedure.

Once the symmetry result for problem \eqref{s1eq4}-\eqref{s1eq5} is achieved, one can ask for its quantitative stability counterpart (as done in \cite{ciraolo2016solutions} for the classical Laplacian case). More precisely, the question is the following: if $u$ is \emph{almost constant} on a parallel surface $\partial G$, is it true that the set $\Omega$ is \emph{almost a ball}? 
In this paper we give a positive answer to the problem by performing a quantitative analysis of the method of moving planes.

It is clear that an answer to this question depends on what we mean for \emph{almost}. In order to precisely state our result, we consider the Lipschitz seminorm $[u]_{\Gamma}$ of $u$ on a surface $\Gamma$
\begin{equation*}
    [u]_{\Gamma} := \sup_{x,y \in \Gamma, \, x \neq y} \frac{|u(x) - u(y)|}{|x-y|}
\end{equation*}
and the parameter
\begin{equation}\label{def:outradius-inradius}
    \rho (\Omega) := \inf \{ |t - s| \ | \ \exists p \in \Omega \ \mathrm{such \ that} \ B_s(p) \, \subset \Omega \subset B_t(p) \} \,,
\end{equation}
which controls how much the set $\Omega$ differs from a ball (clearly, $\rho(\Omega) = 0$ if and only if $\Omega$ is a ball). 
%
%

\begin{figure}[h]
\centering
\includegraphics[width=0.55\textwidth]{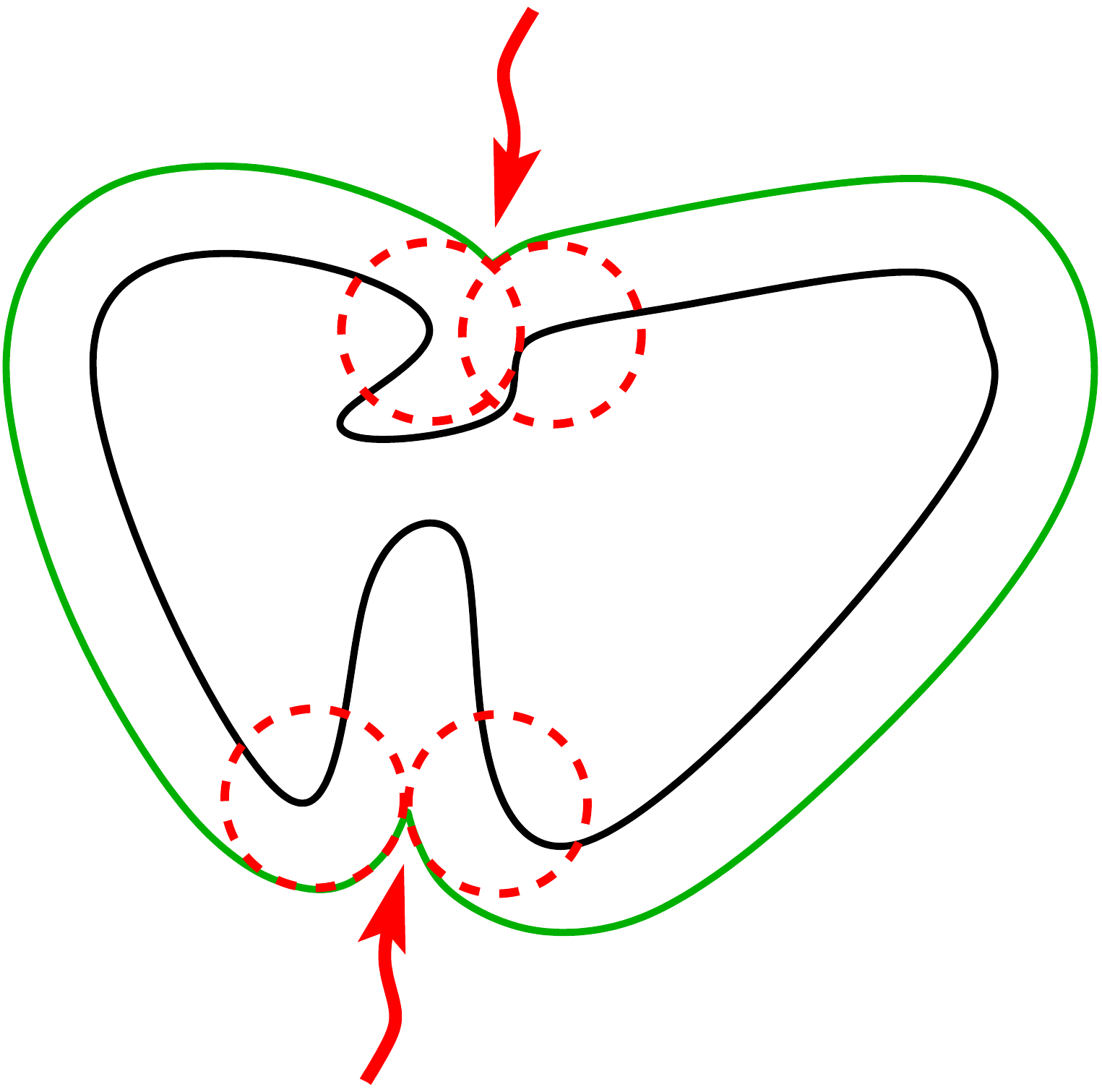}
\caption{An example in which~$G$ is~$C^\infty$ but~$\Omega$ is not~$C^1$.}
\label{LDSRIL-2}
\end{figure}

\begin{figure}[h]
\centering
\includegraphics[width=0.75\textwidth]{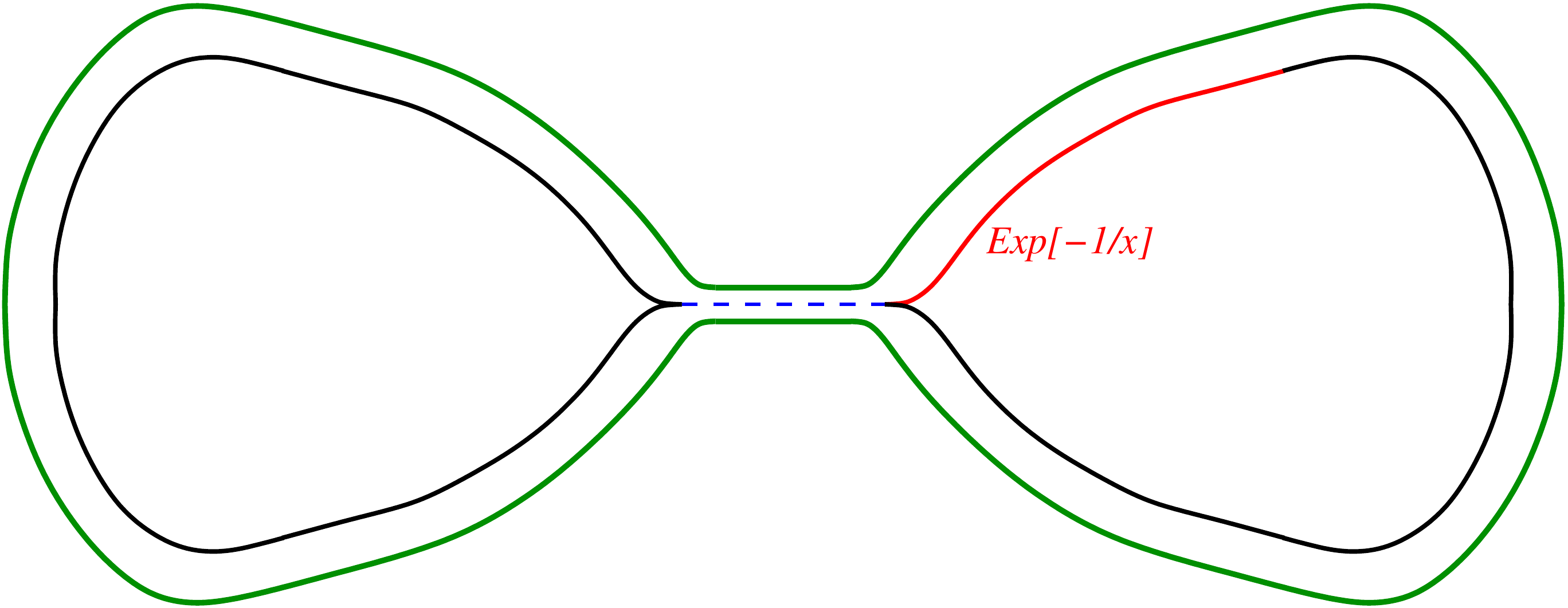}
\caption{An example in which a parallel set of $\Omega$ is not~$C^1$ even though~$\Omega$ is~$C^\infty$.}
\label{LDSRIL}
\end{figure}

Our main goal is to obtain quantitative bounds on $\rho (\Omega)$ in terms of $[u]_{\partial G}$. In particular, our second main result\footnote{We observe that there exist sets~$G$ which are $C^\infty$ but such that~$\Omega := G + B_R$ is not even~$C^1$, see e.g. Figure~\ref{LDSRIL-2}.
Moreover, we recall that well known properties of the distance function (see e.g.~\cite[Lemma~14.16]{gilbarg2015elliptic} or \cite[Theorem 5.7]{DelfourZolesio1994}) guarantee that a certain amount of regularity of $\Omega$ suffices for the regularity of its parallel sets $\left\lbrace x \in \Omega \, : \, \mathrm{dist}(x, \partial \Omega ) > R  \right\rbrace$ if $R$ is small enough, but in general~$\Omega$ can be even~$C^\infty$ and its parallel sets may fail to be~$C^1$, see e.g. Figure~\ref{LDSRIL}.

These observations justify the regularity assumptions on~$G$ and~$\Omega$ in Theorem~\ref{theorem2}.} is the following.

\begin{theorem}
\label{theorem2}
Let $G$ be an open and bounded set  of $\mathbb{R}^n$ with $\partial G$ of class $C^1$ and let $\Omega := G + B_R$. Assume that $\partial \Omega$ is of class $C^2$. Let $u \in C^2(\Omega) \cap C(\mathbb{R}^n)$ be a solution of \eqref{s1eq4}. Then, we have that
\begin{equation}
\label{s1eq8}
\rho (\Omega) \leq C_* \, [u]_{\partial G}^{\frac{1}{s +2}} ,
\end{equation}
where $C_* > 0$ is an explicit constant only depending on $n$, $s$, $R$, and the diameter $\mathrm{diam}(\Omega)$ of $\Omega$.
\end{theorem}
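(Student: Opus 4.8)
The plan is to run a quantitative version of the method of moving planes, mirroring the classical strategy of \cite{ciraolo2016solutions} but using the nonlocal tools advertised in the abstract. First I would set up the moving planes machinery: for a unit direction $e$, consider the hyperplanes $T_\lambda = \{x\cdot e = \lambda\}$, the caps $\Omega_\lambda = \Omega \cap \{x\cdot e > \lambda\}$, their reflections $\Omega_\lambda'$, and the antisymmetric function $w_\lambda(x) = u(x) - u(x_\lambda)$ where $x_\lambda$ is the reflection of $x$ across $T_\lambda$. Since $(-\Delta)^s$ commutes with reflections and $(-\Delta)^s u = 1$ in $\Omega$, the function $w_\lambda$ satisfies $(-\Delta)^s w_\lambda = 0$ in $\Omega_\lambda \cap \Omega_\lambda'$, is antisymmetric with respect to $T_\lambda$, and has a sign on the relevant region for $\lambda$ close to the starting position $\lambda_0$ (the maximal $\lambda$ for which $\Omega_\lambda$ is nonempty). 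The critical plane $\lambda_* = \inf\{\lambda : \text{moving planes works down to }\lambda\}$ is then reached; in the rigid case either $\Omega_{\lambda_*}'$ becomes internally tangent to $\partial\Omega$ or $T_{\lambda_*}$ becomes orthogonal to $\partial\Omega$ at some point, and one concludes $w_{\lambda_*}\equiv 0$, i.e.\ symmetry in direction $e$.

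The quantitative step replaces this dichotomy by \emph{approximate} symmetry controlled by $[u]_{\partial G}$. The key observation is that $w_\lambda$ restricted to $\partial G$ is small: for $x\in\partial G$, since $\partial G$ is a parallel surface and $u$ is almost constant there, $|w_\lambda(x)| = |u(x) - u(x_\lambda)| \le 2 [u]_{\partial G}\,\mathrm{diam}(\Omega)$ or, more precisely, one bounds it by the Lipschitz seminorm times the chord length. The strategy is then: (i) a quantitative Hopf-type / boundary estimate showing that if $w_{\lambda_*}$ is nonnegative in $\Omega_{\lambda_*}\cap\Omega_{\lambda_*}'$ and small on $\partial G$, then $w_{\lambda_*}$ is uniformly small throughout (this uses the fractional boundary Harnack inequality and quantitative Hopf lemma for antisymmetric $s$-harmonic functions, valid in the region between the two parallel surfaces where we have a uniform interior-sphere-type geometry of width $R$); and (ii) a propagation argument: if at the critical plane the reflected cap is ``almost tangent'' (tangency defect $\le$ some power of $[u]_{\partial G}$) or the plane is ``almost orthogonal'' to $\partial\Omega$, then $\Omega$ is close to being symmetric in direction $e$. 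Doing this for $n$ independent directions and combining gives that $\Omega$ is close to a set symmetric about $n$ orthogonal hyperplanes through a common point, from which $\rho(\Omega)\le C[u]_{\partial G}^{\alpha}$ follows by a standard geometric measure / John-domain argument; the exponent $\frac{1}{s+2}$ will emerge from chaining the Hölder exponents in the quantitative Hopf lemma ($C^s$ regularity of $u$ contributes an $s$, the geometric tangency defect contributes the $+2$).

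The main obstacle I expect is the quantitative Hopf boundary point lemma and boundary Harnack estimate for antisymmetric $s$-harmonic functions near $T_{\lambda_*}$: one needs a clean, \emph{explicit}, constant-tracked lower bound of the form $w_{\lambda_*}(x) \ge c\,\delta(x)^{s}\cdot(\text{derivative-type quantity})$ near the tangency/orthogonality point, where the geometry degenerates. In the nonlocal setting the usual barrier constructions (powers of the distance to $\partial\Omega$) must be adapted to the antisymmetry constraint — effectively one works with the explicit torsion function of a ball $\psi_B$ in \eqref{eq:explicit fractional torsion ball} and its reflections as barriers — and one must control the nonlocal interaction terms coming from the exterior of the caps. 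The parallel-surface structure $\Omega = G + B_R$ is what saves the argument: it guarantees a uniform interior ball condition of radius $R$ at every boundary point, so all the barrier estimates are uniform and the constants depend only on $n,s,R$ and $\mathrm{diam}(\Omega)$ as claimed. A secondary technical point is ensuring the moving planes can \emph{start}: here the $C^1$ regularity of $\partial G$ (hence of $\partial\Omega$) is exactly what is needed, and the smallness of $w_\lambda$ on $\partial G$ for $\lambda$ near $\lambda_0$ must be upgraded to smallness in $\Omega_\lambda\cap\Omega_\lambda'$ via the same boundary estimates, keeping track of how the constants blow up as $\lambda\to\lambda_0$ so that the final bound remains of the stated polynomial order.
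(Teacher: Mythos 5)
Your overall architecture (quantitative moving planes, boundary Harnack and a quantitative Hopf lemma for antisymmetric $s$-harmonic functions, treatment of the two critical cases, combination of $n+1$ directions at the end) matches the paper's, but there are two genuine gaps. First, the ``key observation'' that $w_\lambda$ is small on all of $\partial G$ is false as stated: for $x\in\partial G$ the bound $|u(x)-u(x_\lambda)|\le [u]_{\partial G}\,|x-x_\lambda|$ requires $x_\lambda\in\partial G$, which is exactly the symmetry you are trying to prove. Smallness of $v=u-u\circ Q$ is available only at the special points furnished by the critical position: at the touching point $P$ in Case~1 (where both $P$ and $P'$ lie on $\partial G$), and, in Case~2, for the tangential difference quotients at the orthogonality point, which must then be transported to an interior point $\overline P$ at distance $\sim R$ from $T$ via the antisymmetric boundary Harnack inequality of Lemma~\ref{s3lemma1}. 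Your argument as written starts from an input you do not have.

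Second, the central quantitative mechanism — how smallness of $v$ at that \emph{single} point yields geometric closeness — is left as a black box, and the route you hint at (propagate pointwise smallness of $w_{\lambda_*}$ through the cap, then argue that ``almost tangency'' forces almost symmetry) is the local-case strategy of \cite{ciraolo2016solutions}, which the paper deliberately replaces. The paper instead exploits nonlocality directly: the quantitative Hopf lemma (Lemma~\ref{s3lemma2}) bounds $v(P)$ from below by $C\,\mathrm{dist}(K,H^+)\,|K|\,\inf_K v\,\psi_B(P)$ for any set $K$ contained in the non-covered region $\Omega^-\setminus Q(\widehat\Omega)$, so that smallness of $v(P)$ caps the ``useful mass'' $|K|$. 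Taking $K=K_\delta$ (points at distance $\ge\delta$ from $\partial\Omega$ and from $T$), the growth estimate $u\ge \gamma_{n,s}R^s\,\mathrm{dist}(\cdot,\partial\Omega)^s$ gives $\inf_{K_\delta}v\gtrsim\delta^s$, the excised strips have measure $O(\delta)$, and one gets $|\Omega^-\setminus Q(\widehat\Omega)|\lesssim \delta^{-(1+s)}[u]_{\partial G}+\delta$; optimizing $\delta=[u]_{\partial G}^{1/(s+2)}$ produces the exponent in \eqref{s1eq8} — not a chaining of H\"older exponents as you suggest. Finally, the passage from measure-closeness $|\Omega\,\triangle\,Q(\Omega)|\lesssim[u]_{\partial G}^{1/(s+2)}$ in each direction to the pointwise quantity $\rho(\Omega)$ is not a ``standard John-domain argument'': it requires the slicing argument of \cite{ciraolo2018rigidity} to bound $|\lambda_e|$ for an arbitrary direction, together with the choice of the direction joining the points realizing $\rho_{\min}$ and $\rho_{\max}$.
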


Hence, Theorem \ref{theorem2} asserts that
the quantity $[u]_{\partial G}$ bounds from above a pointwise measure of closeness of $\Omega$ to a ball, namely $\rho(\Omega)$. The closer $[u]_{\partial G}$ is to zero, the closer the domain $\Omega$ is to a ball (in a pointwise sense). Of course, when $[u]_{\partial G} =0$, estimate~\eqref{s1eq8} reduces to $\rho(\Omega)=0$, and therefore \eqref{def:outradius-inradius} gives that $\Omega$ is a ball: in this sense, Theorem \ref{theorem2} recovers Theorem \ref{theorem1}. 

We notice that the quantitative estimate \eqref{s1eq8} is of H\"older type and may be not optimal since we do not recover the optimal linear bound at the limit for $s \to 1$ which was obtained in \cite{ciraolo2016solutions}. The main reason
for the exponent~$\frac1{s+2}$ in~\eqref{s1eq8} is due to the technique used
to obtain our quantitative estimates, which are significantly different from the local case
and rely on detecting ``useful mass'' of the functions involved in suitable regions of the domain.

We stress that the assumption that the constant~$C_*$ in Theorem~\ref{theorem2}
depends on the diameter of~$\Omega$ is essential and cannot be removed:
an explicit example will be presented in Section~\ref{1A197}. 

 We finally notice that we do not have to make any assumption on connectedness on $G$. This is a remarkable difference with respect to the classical local case \cite{ciraolo2016solutions}. In this direction it is not difficult to see that Theorems \ref{theorem1} and \ref{theorem2} hold under weaker assumptions, in particular by assuming that the value $c$ in \eqref{s1eq5} may be different on each connected component of $G$. In Section \ref{sect8} we give further and more precise details on this result.

%
%

%
%
%

\bigskip

This paper is organized as follows. In Section~\ref{HARSEC}
we present a new
boundary Harnack result on a half ball for antisymmetric $s$-harmonic functions.
Section \ref{section_mov_planes} is devoted to the moving planes method and the proof of Theorem \ref{theorem1}; we make use of weak and strong maximum principles, as well as the boundary Harnack that we have established in Section~\ref{HARSEC}. 

In Section \ref{sect_quant_estimates} we present
a quantitative version of the fractional Hopf lemma introduced in~\cite[Proposition 3.3]{fall2015overdetermined}. Section \ref{sect_approx_1d} uses the previous results in order to get a quantitative stability estimate in one direction. Lastly, in Section \ref{sect_proofstability} we complete the proof of Theorem \ref{theorem2} by passing from the approximate symmetry in one direction to the desired quantitative symmetry result following an idea used in \cite{ciraolo2018rigidity}.

Section~\ref{1A197} presents an example that shows that the dependence
of the constant~$C_*$ in Theorem~\ref{theorem2} upon the diameter
of the domain cannot be removed. In Section \ref{sect8} we describe some possible generalization of Theorems \ref{theorem1} and \ref{theorem2}.
A technical observation of geometric type is placed in Appendix~\ref{APP:A}.


In general, we believe that the auxiliary results developed in this article, such as the boundary Harnack estimate for antisymmetric $s$-harmonic functions and the corresponding quantitative version of the fractional Hopf lemma
are of independent interest and can be used in other contexts too.

In terms of applications, in addition to the classical motivations in the study of invariant isothermic surfaces \cite{MagnaniniSakaguchi},
we mention that the overdetermined problem in~\eqref{s1eq4}
and~\eqref{s1eq5} can be inspired by questions related to population dynamics
and specifically to the determination of optimal rural-urban fringes:
in this context, our results would detect that the fair shape for an urban settlement is the circular one,
as detailed in Appendix~\ref{URBE}.

\section{Boundary Harnack inequality}\label{HARSEC}

We present here a new boundary Harnack inequality for antisymmetric $s$-harmonic functions.
{F}rom now on, we will employ the notation $H^+ := \{x_1 > 0\}$, $H^- := \{ x_1 < 0 \}$ and $T\coloneqq \{ x_1 = 0 \}$. We define $Q: \mathbb{R}^n \to \mathbb{R}^n$, with $y \mapsto y' = (-y_1,y_2,\dots,y_n)$, the reflection with respect to $T$. Moreover, for $R>0$ we call $B^+_R:= B_R \cap H^+$ and $B^-_R:= B_R \cap H^-$.

The main result towards the boundary Harnack inequality in our setting is the following:

\begin{lemma}
\label{s3lemma1}
Let $u\in C^2(B_R) \cap C(\mathbb{R}^n)$ with
\begin{equation}
\label{intconharn}
    \int_{ \mathbb{R}^n}\frac{|u(x)|}{1+|x|^{n+2s}}<+\infty
\end{equation}
be a solution of
\begin{equation*}
    \begin{cases}
    (-\Delta)^s u = 0 \quad & \text{in }  B_R,\\
    u(x') = - u(x) \quad &\text{for  every }  x \in \mathbb{R}^n,\\
    u \geq 0 \quad &  \text{in }  H^+.
    \end{cases}
\end{equation*}
There exists a constant $K > 1$ only depending on $n$ and $s$ such that, for every $z \in B_{R/2}^+$ and for every $x \in B_{R/4}(z) \cap B_R^+$ we have
\begin{equation}
\label{s3eq2}
    \frac{1}{K} \frac{u(z)}{z_1} \leq \frac{u(x)}{x_1} \leq K \frac{u(z)}{z_1}.
\end{equation}
\end{lemma}

\begin{proof}
We recall that the Poisson Kernel for the fractional Laplacian in the ball is given by (see for example \cite{bucur2015some})
\begin{equation*}
    P_{n,s} (x,y) \coloneqq c_{n,s} \bigg( \frac{R^2-|x|^2}{|y|^2-R^2} \bigg)^s \frac{1}{|x-y|^n}.
\end{equation*}
Hence for every $x \in B_R$, we have
\begin{equation*}
\begin{split}
\frac{u(x)}{c_{n,s}}\,=\,&
\int_{ \mathbb{R}^n \setminus B_R(0) }
\bigg( \frac{R^2-|x|^2}{|y|^2-R^2} \bigg)^s \frac{1}{|x-y|^n} u(y) \,dy\\=\,&
\int_{ H^+ \setminus B_R^+ }
\bigg( \frac{R^2-|x|^2}{|y|^2-R^2} \bigg)^s \frac{1}{|x-y|^n} u(y) \,dy
-
\int_{ H^+ \setminus B_R^+ }
\bigg( \frac{R^2-|x|^2}{|y|^2-R^2} \bigg)^s \frac{1}{|x-y'|^n} u(y) \,dy
\\=\,&
\int_{ H^+ \setminus B_R^+ }
\bigg( \frac{R^2-|x|^2}{|y|^2-R^2} \bigg)^s \bigg( \frac{1}{|x-y|^n} - \frac{1}{|x-y'|^n} \bigg) u(y) \,dy =: \int_{ H^+ \setminus B_R^+ } \frac{T_{n,s}(x,y)}{c_{n,s}} u(y) dy.
\end{split}
\end{equation*}
Our goal is to show that there exists a constant $K>1$ depending only on $n$ and $s$ such that 
\begin{equation}
\label{s3eq5}
   \frac{1}{K}\, \frac{x_1}{z_1} \leq \frac{T_{n,s} (x,y)}{T_{n,s} (z,y)} \leq K\, \frac{x_1}{z_1},
\end{equation}
for every $z \in B_{R/2}^+$, $x \in B_{R/4}(z) \cap B_R^+$ and $y \in H^+ \setminus B_R^+$.

We remark that once~\eqref{s3eq5} is established
the claim in~\eqref{s3eq2} readily follows, since
\begin{equation*}
    \frac{u(x)}{x_1} = \int_{ H^+ \setminus B_R^+ } \frac{T_{n,s}(x,y)}{x_1} u(y) dy \leq K \int_{ H^+ \setminus B_R^+ } \frac{T_{n,s}(z,y)}{z_1} u(y) dy = K \, \frac{u(z)}{z_1},
\end{equation*}
which is precisely the second inequality in \eqref{s3eq2}. The first inequality in \eqref{s3eq2} can be obtained similarly.\\

Now we prove \eqref{s3eq5}. We notice that
\begin{equation}
\label{s3eq7}
    \begin{split}
        \frac{T_{n,s} (x,y)}{T_{n,s} (z,y)} \,= & \, \bigg( \frac{R^2-|x|^2}{|y|^2-R^2} \bigg)^s \, \bigg( \frac{|y|^2-R^2}{R^2-|z|^2} \bigg)^s \, \bigg( \frac{1}{|x-y|^n} - \frac{1}{|x-y'|^n} \bigg) \bigg( \frac{1}{|z-y|^n} - \frac{1}{|z-y'|^n} \bigg)^{-1} 
        \\= \, &
        \bigg( \frac{R^2-|x|^2}{R^2-|z|^2} \bigg)^s \, \frac{|z-y|^n}{|x-y|^n} \, \frac{|z-y'|^n}{|x-y'|^n} \, \frac{|x-y'|^n - |x-y|^n}{|z-y'|^n - |z-y|^n} \,,
    \end{split}
\end{equation}
and we estimate the first term as follows
\begin{equation}\label{s3eq8}
    \bigg( \frac{7}{16} \bigg)^s \leq \bigg( \frac{R^2-(3R/4)^2}{R^2} \bigg)^s \leq \bigg( \frac{R^2-|x|^2}{R^2-|z|^2} \bigg)^s \leq \bigg( \frac{R^2}{R^2-(R/2)^2} \bigg)^s \leq \bigg( \frac{4}{3} \bigg)^s.
\end{equation}
Moreover, we observe that
\begin{equation}\label{s3eq9}
    \begin{split}
        & \frac{|z-y|}{|x-y|} \leq \frac{|x-y|}{|x-y|} + \frac{|x-z|}{|x-y|} \leq 1 + \frac{R/4}{R/4} = 2, \\
        & \frac{|z-y|}{|x-y|} \geq \frac{|y|-|z|}{|y|+|x|} \geq \frac{|y|-R/2}{|y|+3R/4} \geq \frac{2}{7}.
    \end{split}
\end{equation}
Now, considering the last terms in \eqref{s3eq7}, we can write
\begin{equation}
\label{s3eq10}
    \frac{|z-y'|^n}{|x-y'|^n} \, \frac{|x-y'|^n - |x-y|^n}{|z-y'|^n - |z-y|^n} =: \frac{1-\alpha^n}{1-\beta^n} \,,
\end{equation}
where
$$
\alpha = \frac{|x-y|}{|x-y'|} \quad
\text{ and }
\quad \beta=\frac{|z-y|}{|z-y'|}  \,. 
$$We observe that
\begin{equation}
\label{s3eq11}
    0 \leq \alpha^2 = \frac{|x-y|^2}{|x-y'|^2} = 1 - \frac{4x_1 y_1}{|x-y'|^2} \leq 1 \quad \text{and} \quad  0 \leq \beta^2 = \frac{|z-y|^2}{|z-y'|^2} = 1 - \frac{4z_1 y_1}{|z-y'|^2} \leq 1 \, .
\end{equation}
Going back to \eqref{s3eq10} we write
\begin{equation*}
    \frac{1-\alpha^n}{1-\beta^n} \, = \frac{(1-\alpha)(1+\alpha+\dots +\alpha^{n-1})}{(1-\beta)(1+\beta+\dots +\beta^{n-1})} \, = 
    \left[ \frac{1-\alpha^2}{1-\beta^2} \right] \frac{(1+\beta)(1+\alpha+\dots +\alpha^{n-1})}{(1+\alpha)(1+\beta+\dots +\beta^{n-1})}.
\end{equation*}
{F}rom \eqref{s3eq11} we easily get
\begin{equation}
\label{s3eq14}
    \frac{1}{2n} \leq \frac{(1+\beta)(1+\alpha+\dots +\alpha^{n-1})}{(1+\alpha)(1+\beta+\dots +\beta^{n-1})} \leq 2 n
\end{equation}
and, using estimates similar to the ones in \eqref{s3eq9},
\begin{equation}
\label{s3eq15}
    \bigg( \frac{2}{7} \bigg)^2 \, \frac{x_1}{z_1} \leq  \frac{1-\alpha^2}{1-\beta^2} \leq 4 \, \frac{x_1}{z_1}.
\end{equation}

By plugging \eqref{s3eq14} and \eqref{s3eq15} into equation \eqref{s3eq10} and then combining it with \eqref{s3eq8} and \eqref{s3eq9}, from \eqref{s3eq7} we get 
\begin{equation*}
      \bigg( \frac{7}{16} \bigg)^s \, \bigg( \frac{2}{7} \bigg)^n \, \frac{1}{2n} \, \frac{x_1}{z_1} \leq \frac{T_{n,s} (x,y)}{T_{n,s} (z,y)} \leq \bigg( \frac{4}{3} \bigg)^s \, 2^{n+3} \, n \, \frac{x_1}{z_1}
\end{equation*}
which leads to \eqref{s3eq5} if we set $K = K(n,s) := (4/3)^s (7/2)^{n+2}\, 2n > 1$. This completes the proof.
\end{proof}

As a consequence of the previous result, we get the following two propositions which provide boundary Harnack's inequalities
of independent interest:

\begin{proposition}
\label{s2prop2}
Let $u\in C^2(B_R) \cap C(\mathbb{R}^n)$ be antisymmetric w.r.t. $T$, $s$-harmonic in $B_R$, nonnegative in $H^+$ and such that \eqref{intconharn} holds. Then,
\begin{equation}
    \sup_{B_{R/2}^+} u \leq M u(\hat{x}),
\end{equation}
where $\hat{x}=\frac{R}{2} e_1$ and  $M > 0$ is a constant depending on $n$ and $s$.
\end{proposition}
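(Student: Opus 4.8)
The plan is to derive the proposition directly from Lemma \ref{s3lemma1} by a standard covering argument. First I would invoke Lemma \ref{s3lemma1} with the distinguished point $\hat{x} = \frac{R}{2}e_1$, which lies in $B_{R/2}^+$ and whose first coordinate is $\hat{x}_1 = R/2$; the lemma then says that for every $x \in B_{R/4}(\hat{x}) \cap B_R^+$ one has $u(x)/x_1 \le K\, u(\hat{x})/\hat{x}_1 = \frac{2K}{R} u(\hat{x})$, hence $u(x) \le \frac{2K}{R} x_1 u(\hat{x}) \le K\, u(\hat{x})$ since $x_1 \le R/2$ on $B_R^+$. This controls $u$ on the single ball $B_{R/4}(\hat{x})$; the task is to propagate this control to all of $B_{R/2}^+$.

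The key step is a chaining argument. Given an arbitrary point $z \in B_{R/2}^+$, I would connect $z$ to $\hat{x}$ by a finite chain of points $z = z_0, z_1, \dots, z_N = \hat{x}$, all lying in $H^+$, such that consecutive points satisfy $z_{j+1} \in B_{R/4}(z_j) \cap B_R^+$ and each $z_j \in B_{R/2}^+$, with $N$ bounded by a constant depending only on $n$ (and the geometry, i.e. essentially on the ratio of relevant radii, which is fixed). Applying the two-sided estimate \eqref{s3eq2} from Lemma \ref{s3lemma1} along each link of the chain gives $\frac{u(z)}{z_1} \le K^N \frac{u(\hat{x})}{\hat{x}_1}$, and then $u(z) \le K^N \frac{z_1}{\hat{x}_1} u(\hat{x}) \le K^N u(\hat{x})$ since $z_1 \le R/2 = \hat{x}_1$. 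Taking the supremum over $z \in B_{R/2}^+$ and setting $M := K^N$ yields the claim. One subtlety is that near the hyperplane $T = \{x_1 = 0\}$ the quotients $u(z)/z_1$ are still finite and well-behaved because $u$ is $C^2$ and antisymmetric (so $u$ vanishes on $T$ and $u(z)/z_1$ extends continuously, being comparable to $\partial_1 u$), and moreover the estimate \eqref{s3eq2} is scale-invariant in a way that does not degenerate as $z_1 \to 0$; thus the chain can reach points arbitrarily close to $T$ with a uniformly bounded number of steps, essentially by using balls of radius comparable to $z_1$ near the boundary or, more simply, by first moving to a point at height $\sim R$ and noting that $B_{R/2}^+$ is covered by finitely many balls of radius $R/4$ that can be linked inside $B_R^+$.

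The main obstacle I anticipate is handling the points of $B_{R/2}^+$ that are close to the hyperplane $T$: one must ensure that the chain of balls used to connect such a point to $\hat{x}$ stays within $H^+$ (so that Lemma \ref{s3lemma1} applies, since that lemma's hypothesis and conclusion involve division by $x_1 > 0$) and within $B_R^+$, while keeping the number of steps uniformly bounded. This is a purely geometric combinatorial fact about covering the half-ball $B_{R/2}^+$ by a bounded number of quarter-radius balls with overlapping property; I would dispatch it either by an explicit finite cover or by a compactness/connectedness argument, noting that $\overline{B_{R/2}^+}$ is compact and the relation ``$p$ and $q$ can be joined by one admissible link'' generates all of it. Once that geometric lemma is in place, the analytic content is entirely supplied by the already-proven Lemma \ref{s3lemma1}, and the constant $M$ depends only on $n$, $s$ (through $K$) and on $R$ only through the fact that the number of covering balls depends on the fixed ratio of radii — so in fact $M$ can be taken to depend only on $n$ and $s$, but stating it as depending on $n$, $s$, $R$ is harmless.
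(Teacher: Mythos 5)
Your chaining argument is correct and is exactly the intended route: the paper states this proposition without proof, simply as a consequence of Lemma \ref{s3lemma1}, and your derivation (connect any $z\in B_{R/2}^+$ to $\hat{x}$ by a bounded chain inside the convex set $\overline{B_{R/2}}\cap H^+$ with steps of length below $R/4$, telescope the two-sided estimate \eqref{s3eq2}, and use $z_1\le R/2=\hat{x}_1$) supplies the missing details. Your worry about points near $T$ is indeed moot, since \eqref{s3eq2} is uniform in $x_1$ and the final factor $z_1/\hat{x}_1\le 1$ only helps; the only cosmetic slips are the claim ``$x_1\le R/2$ on $B_R^+$'' in your warm-up paragraph and the fact that $\hat{x}$ lies on $\partial B_{R/2}$, both harmless (use the lemma with source $z_{N-1}$ and target $\hat{x}$ in the last link).
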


\begin{proof}
Let $x_\star \in \overline{B_{R/2}^+}$ be such that
\begin{equation*}
    u(x_\star) = \sup_{B_{R/2}^+} u.
\end{equation*}
If $u(x_\star) = 0$ the result is trivial. Therefore, we can assume $u(x_\star) > 0$ and $(x_\star)_1 > 0$.

\bigskip

We now point out that any point $x \in B_{R/2}^+$ can be connected to $\hat{x}$ by a Harnack chain made at most of $3$ balls of radius $R/4$. Hence, by choosing $x_a, \, x_b \in B_{R/2}^+$ such that
\begin{equation*}
    \mathrm{dist}(x_\star, x_a) \leq R/4, \quad \mathrm{dist}(x_a, x_b) \leq R/4 \quad and \quad \mathrm{dist}(x_b, x_\star) \leq R/4
\end{equation*}
we can then apply Lemma \ref{s3lemma1} and get
\begin{equation*}
    \frac{1}{K} \, \frac{u(x_\star)}{(x_\star)_1} \leq \frac{u(x_a)}{(x_a)_1} \leq K \, \frac{u(x_b)}{(x_b)_1} \leq K^2 \, \frac{u(\hat{x})}{(\hat{x})_1}
\end{equation*}
which gives
\begin{equation*}
    \sup_{B_{R/2}^+} u = u(x_\star) \leq  K^3 \, \frac{u(\hat{x})}{R/2} \, (x_\star)_1 \leq K^3 \, u(\hat{x}),
\end{equation*}
where in the last inequality we have used that $(x_\star)_1 \leq R/2$.
\end{proof}

\begin{proposition}
Let $u, v\in C^2(B_R) \cap C(\mathbb{R}^n)$ be antisymmetric w.r.t. $T$ and satisfying \eqref{intconharn}, and assume that
\begin{equation}
    \begin{cases}
    (-\Delta)^s u = 0 = (-\Delta)^s v \quad in \ B_R^+,\\
    u,v \geq 0 \quad in \ H^+.
    \end{cases}
\end{equation}
Then
\begin{equation}
    \sup_{B_{R/2}^+} \frac{u}{v} \leq K^2 \inf_{B_{R/2}^+} \frac{u}{v},
\end{equation}
where $K = K(n,s) > 1$ is the constant given in \eqref{s3eq2}.
\end{proposition}

\begin{proof}
From Lemma \ref{s3lemma1} we have that for every $z \in B_{R/2}^+$ and every $x \in B_{R/4}(z) \cap B_R^+$
\begin{equation*}
    \frac{1}{K^2} \frac{u(z)}{v(z)} \leq \frac{u(x)}{v(x)} \leq K^2 \frac{u(z)}{v(z)}.
\end{equation*}
The proof then follows by using the Harnack chain as done in the proof of Proposition \ref{s2prop2}.
\end{proof}

\section{Moving planes method and symmetry result}  \label{section_mov_planes}

We introduce the notation needed in order to exploit the moving planes method. Given $e \in \mathbb{S}^{n-1}$, a set $E \subset \mathbb{R}^n$ and $\lambda \in \mathbb{R}$, we set
\begin{align*}
    &T_\lambda = T_\lambda^e = \{ x \in \mathbb{R}^n \, | \, x \cdot e = \lambda \} & &\textrm{a hyperplane orthogonal to } e,\\
    &H_\lambda = H_\lambda^e = \{ x \in \mathbb{R}^n \, | \, x \cdot e > \lambda \} & &\textrm{the ``positive'' half space with respect to } T_\lambda \\
    &E_\lambda = E \cap H_\lambda & &\textrm{the ``positive'' cap of } E,\\
    &x_\lambda' = x -2(x\cdot e - \lambda) \, e & &\textrm{the reflection of } x \textrm{ with respect to } T_\lambda,\\
    &Q= Q_\lambda^e : \mathbb{R}^n \to \mathbb{R}^n, x \mapsto x_\lambda' & &\textrm{the reflection with respect to } T_\lambda.
\end{align*}

If $E \subset \mathbb{R}^n$ is an open bounded set with boundary of class $C^1$ and $\Lambda_e := \sup \{ x \cdot e \, | \, x \in E \}$ it makes sense to define
\begin{equation*}
    \lambda_e = \inf \{ \lambda \in \mathbb{R} \, | \, Q(E_{\Tilde{\lambda}}) \subset E, \textrm{for all} \, \Tilde{\lambda} \in (\lambda, \Lambda_e) \}.
\end{equation*}

{F}rom this point on, given a direction $e \in \mathbb{S}^{n-1}$, we will refer to $T_{\lambda_e} = T^e$ and $E_{\lambda_e} = \widehat{E}$ as the \textit{critical hyperplane} and the \textit{critical cap} with respect to $e$, respectively, and we call $\lambda_e$ the \textit{critical value} in the direction $e$. We now recall from \cite{serrin1971symmetry} that for any given direction $e$ one of the following two conditions holds:\\

\textbf{Case 1} - The boundary of the cap reflection $Q^e(\widehat{E})$ becomes internally tangent to the boundary of $E$ at some point $P \not \in T^e$;\\

\textbf{Case 2} - the critical hyperplane $T^e$ becomes orthogonal to the boundary of $E$ at some point $Q \in T^e$.\\

Throughout this paper, the method of moving planes will be applied to the set $E=G$, where $G$ is the set appearing in \eqref{IIGR}. Hence the minimal regularity assumption that we need on $G$ is that $G$ is of class $C^1$. We also notice that, in our setting, the critical values $\lambda_e$ for $G$ are also critical values for the set $\Omega$, even if we do not need to assume further regularity on $\Omega$ in order to apply the method of moving planes. This is the reason why in Theorem \ref{theorem1} we only require that $G$ is of class $C^1$. We also notice that in Theorem \ref{theorem2} we assume that $\Omega$ is of class $C^2$, but this assumption is not needed for the application of the method of moving planes but it comes from using other tools in the proof.

\medskip

In order to prove symmetry for the problem \eqref{s1eq4} with condition \eqref{s1eq5} we will use a fractional version of the weak and strong maximum principles and a Hopf-type Lemma for antisymmetric $s$-harmonic functions. 

For $u, v \in H^s (\mathbb{R}^n)$, we consider the bilinear form induced by the fractional Laplacian
\begin{equation*}
    \mathcal{E}(u,v) := \frac{c_{n,s}}{2} \int_{\mathbb{R}^n}\int_{\mathbb{R}^n} \frac{\big( u(x) - u(y) \big) \big( v(x) - v(y) \big)}{|x-y|^{n+2s}} dxdy.
\end{equation*}

Let
\begin{equation*}
\mathcal{D}^s(\Omega) := \{ u: \mathbb{R}^n \to \mathbb{R} \quad \mathrm{measurable} \ : \ \mathcal{E}(u,\varphi) \text{ is finite in Lebesgue sense for every } \varphi \in H^s_0(\Omega) \} \, ,
\end{equation*}
where
$$
H^s_0(\Omega) := \{ u \in H^s( \mathbb{R}^n ) \ : \ u=0 \ \text{ on }  \mathbb{R}^n \setminus \Omega \}.
$$
See e.g.~\cite{MR2944369, MR3396210}
and the references therein for further information about  fractional functional spaces.

Given $g \in L^2(\Omega)$ we say that a function $u \in \mathcal{D}^s(\Omega)$ is a \emph{solution} of
\begin{equation}
\label{s3fracDir}
\begin{cases}
    (-\Delta)^s u = g \quad &in \ \Omega,\\
    u= 0 \quad &in \  \mathbb{R}^n \setminus \Omega,
\end{cases}
\end{equation}
if for all $\varphi \in H_0^s(\Omega)$ we have
\begin{equation*}
    \mathcal{E} (u, \varphi) = \int_\Omega g(x) \, \varphi(x) \, dx.
\end{equation*}

It will be useful to introduce the notion of entire antisymmetric supersolution.
Let $H \subset \mathbb{R}^n$ be a half space and let $A$ be an open set with $A \subset H$. Given $\Tilde{g} \in L^2(A)$ we say that $v \in \mathcal{D}^s (A)$ is an \emph{entire antisymmetric supersolution}\footnote{Since we are going to apply the method of moving planes, the set $A$ will typically be the intersection between the set $\Omega$ and a half space, and the function $v$ will be the difference between the solution $u$ of \eqref{s3fracDir} and its reflection with respect to an hyperplane.} of $(-\Delta)^s v = \Tilde{g}$ in $A$, if the following conditions hold:
\begin{itemize}
\item $v$ is a supersolution of $(-\Delta)^s v = \Tilde{g}$ in $A$, that is, for all $\varphi \in H_0^s(A)$, $\varphi \ge 0$ we have
\begin{equation*}
    \mathcal{E} (v, \varphi) \geq \int_A \Tilde{g}(x) \, \varphi(x) \, dx,
\end{equation*}
\item  $v \geq 0$ in $H \setminus A$ and $v$ is antisymmetric with respect to $\partial H$.
\end{itemize}
%
%

\medskip

We are now ready to prove Theorem \ref{theorem1}.

\begin{proof}[Proof of Theorem \ref{theorem1}]
We apply the method of moving planes to the set $G$. Let $e \in \mathbb{S}^{n-1}$ be a fixed direction. Without loss of generality, we can assume that $e= e_1$ and that the critical hyperplane $T$ goes through the origin (that is, $\lambda_e = 0$). We call $H^-:= \{ x_1  < 0 \}$ and consider the function
\begin{equation*}
    v(x) := u(x) - u(Q(x)) \quad \mathrm{for} \ x \in \mathbb{R}^n,
\end{equation*}
where $Q: \mathbb{R}^n \to \mathbb{R}^n$ is the reflection with respect to $T$.
We have
\begin{equation*}
\begin{cases}
(-\Delta)^s v = 0 \quad &in \ Q(\widehat{\Omega}),\\
v \geq 0 \quad &in \ H^- \setminus Q(\widehat{\Omega}),\\
v(Q(x)) = - v(x) \quad &\mathrm{for \ every} \  x \in \mathbb{R}^n.
\end{cases}
\end{equation*}
Thus, $v$ is an entire antisymmetric supersolution on $Q( \widehat{\Omega} )$. By the weak maximum principle (see~\cite[Proposition 3.1]{fall2015overdetermined}) we know that $v \geq 0$ in $H^-$. The strong maximum principle (see~\cite[Corollary 3.4]{fall2015overdetermined}) then implies that either $v > 0$ in $Q( \widehat{\Omega} )$ or $v \equiv 0$ in $\mathbb{R}^n$. We will show that the first possibility cannot occur. 

Assume by contradiction that $v > 0$ in $Q( \widehat{\Omega} )$. We need to distinguish between the two possible critical cases.\\

\textbf{Case 1} - since both $P$ and $P'$ belong to $\partial G$ and \eqref{s1eq5} holds, we immediately get that
\begin{equation*}
    v(P) = u(P) - u(P') = 0,
\end{equation*}
which is already a contradiction.\\

\textbf{Case 2} - in this case the critical hyperplane $T=\{ x_1 = 0 \}$ is orthogonal to $\partial G$ at some point $Q=(0, Q_2, \dots, Q_n)$ and therefore \eqref{s1eq5} ensures that
\begin{equation}
\label{s2eq7}
    \partial_1 v (Q) = 0.
\end{equation}
On the other hand, Lemma \ref{s3lemma1} implies the following Hopf-type inequality 
\begin{equation}\label{eq:new Hopf-type for antisymmetric for Q}
    \partial_1 v (Q) < 0,
\end{equation}
which contradicts \eqref{s2eq7} and hence \eqref{s1eq5}.
Indeed, setting $z = (-R/4, Q_2, \dots, Q_n)$ and $x=x_t = (-t, Q_2, \dots, Q_n) \in B_{R/4}(z)$, we have that
\begin{equation}\label{eq:new Hopf antisymmetric from Harnack}
	\frac{v(x_t)}{- t} \ge - \frac{ 4}{R K} v(z),
\end{equation}
where $K>1$ is a constant only depending on $n$ and $s$.
Being $z \in Q( \widehat{\Omega})$, we have that $v(z)>0$, and by letting $t$ go to 0 \eqref{eq:new Hopf-type for antisymmetric for Q} follows by \eqref{eq:new Hopf antisymmetric from Harnack}.

\medskip

This implies that $G$ (and hence $\Omega$) is symmetric with respect to the direction $e$. Since the direction $e$ is arbitrary, we easily obtain that $G$ (and hence $\Omega$) is a ball.
\end{proof}


An alternative approach to the Hopf-type inequality 
\eqref{eq:new Hopf-type for antisymmetric for Q} will be developed in a forthcoming manuscript \cite{DipPogTomVald}.


\section{A quantitative maximum principle} \label{sect_quant_estimates}

The following lemma is a quantitative version of \cite[Proposition 3.3]{fall2015overdetermined}. To state it, we adopt the notion of distance between two sets, say~$X$ and~$Y$, defined by
$$\mathrm{dist}(X,Y): = \inf \big\{|x - y |,\quad x\in X,\;y\in Y\big\}.$$
\begin{lemma}
\label{s3lemma2}
%
%
Let $B \subset H^-$ be a ball of radius $R > 0$ such that $\mathrm{dist}(B,H^+) > 0$. Let $v \in C^s(B)$  be an entire antisymmetric supersolution of
\begin{equation*}
    \begin{cases}
    (-\Delta)^s v = 0 \quad &in \ B ,\\
    v \geq 0 \quad &in \ H^-.
    \end{cases}
\end{equation*}
Let $K \subset H^-$ be a bounded set of positive measure such that $\overline{K} \subset (H^- \setminus \overline{B})$ and $\inf_K v > 0$. Then we have that
\begin{equation}
\label{s3eq19}
v \geq C \Big[  \mathrm{dist} (K,H^+) \, |K| \, \inf_K v \Big] \psi_B \quad in \ B,
\end{equation}
where $\psi_B$ is defined in~\eqref{eq:explicit fractional torsion ball},
with
\begin{equation*}
    C:= 
   \frac{2(n+2s) \,C(n,s) \,\mathrm{dist}(B,H^+)^{n+2s+1}}{
\big(\mathrm{dist}(B,H^+)^{n+2s}+C(n,s) \,\vert B \vert\, \gamma_{n,s} \,R^{2s}\big)    
    \big(
\mathrm{diam} (B) + \mathrm{diam} (K) +\mathrm{dist}(Q(K),B)\big)^{n+2s+2}}.
\end{equation*}
\end{lemma}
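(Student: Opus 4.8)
The plan is to estimate the solution $v$ from below inside the ball $B$ by constructing an explicit barrier and using the supersolution property together with the positivity of $v$ on the set $K$. The key observation is that, since $v$ is an entire antisymmetric supersolution of $(-\Delta)^s v = 0$ in $\Omega$ and $B\subset\Omega$, the function $v$ is in particular a (super)solution in $B$, so by the representation formula / maximum principle in $B$ we have, for $x\in B$,
\begin{equation*}
v(x) \ge \int_{\mathbb{R}^n\setminus B} P_B(x,y)\, v(y)\, dy,
\end{equation*}
where $P_B$ is the Poisson kernel of $(-\Delta)^s$ for the ball $B$. The task is then to bound this integral from below. We throw away all of the domain of integration except $K$ (legitimate once we know $v\ge 0$ outside $B$ in the relevant region — which follows from $v\ge 0$ in $H^-$, from antisymmetry handling the contribution of $H^+$, and from $K\subset H^-\setminus\overline B$), obtaining
\begin{equation*}
v(x)\ge \Big(\inf_K v\Big)\int_K P_B(x,y)\, dy.
\end{equation*}

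The core estimate is therefore a lower bound on $\int_K P_B(x,y)\,dy$ of the form $C\,\mathrm{dist}(K,H^+)\,|K|\,\psi_B(x)$. First I would write $P_B(x,y)=c_{n,s}\big(\tfrac{R^2-|x-p|^2}{|y-p|^2-R^2}\big)^s|x-y|^{-n}$ where $p$ is the center of $B$. The factor $(R^2-|x-p|^2)^s$ is, up to the constant $\gamma_{n,s}$, exactly $\psi_B(x)$, so that factor is produced for free. What remains is to bound $\big(|y-p|^2-R^2\big)^{-s}|x-y|^{-n}$ from below on $K$: since $x\in B$ and $y\in K\subset\Omega$, both $|x-y|$ and $|y-p|^2-R^2=(|y-p|-R)(|y-p|+R)$ are controlled by $\mathrm{diam}(\Omega)$ from above, giving a denominator of order $\mathrm{diam}(\Omega)^{n+2s}$ (being a bit more careful, $|y-p|-R \le \mathrm{diam}(\Omega)$ while $|y-p|+R\le 2\,\mathrm{diam}(\Omega)$, and $|x-y|\le\mathrm{diam}(\Omega)$, so the product is at most $\mathrm{diam}(\Omega)^{n+2s}$ up to a dimensional constant). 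This already gives a bound $\int_K P_B(x,y)\,dy\ge C(n,s)\,|K|\,\mathrm{diam}(\Omega)^{-(n+2s)}\,(R^2-|x-p|^2)^s$. To convert $(R^2-|x-p|^2)^s$ into $\psi_B(x)$ with the stated constant one divides and multiplies by $\gamma_{n,s}$; the factor $\big[\mathrm{dist}(B,H^+)^{n+2s}/(\mathrm{dist}(B,H^+)^{n+2s}+\gamma_{n,s}R^{2s})\big]$ and the remaining powers of $\mathrm{dist}(B,H^+)$ and $\mathrm{dist}(K,H^+)$ in $C$ come from being more precise about the Poisson-kernel estimate near the part of $\partial B$ closest to $T$ and from the interplay between $\mathrm{dist}(K,H^+)$ and the geometry — in particular the term $2(n+2)$ suggests an elementary estimate of the type $1-\alpha^{n+2}=(1-\alpha)(1+\alpha+\dots)$ applied to the ratio of distances, exactly as in the proof of Lemma \ref{s3lemma1}.

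The antisymmetry plays a genuine role in justifying that the "outside" contribution to the Poisson representation is nonnegative: writing $\mathbb{R}^n\setminus B = (H^-\setminus B)\cup H^+$ and reflecting the $H^+$ part across $T$ using $v(x')=-v(x)$, one finds (since $B\subset H^-$ with $\mathrm{dist}(B,H^+)>0$, so $x\in B$ is strictly closer to points of $H^-$ than to their reflections) that the combined kernel is nonnegative and $v\ge 0$ there, so discarding everything outside $K\subset H^-\setminus\overline B$ only decreases the integral. This is the same mechanism that produced the kernel $T_{n,s}$ in Lemma \ref{s3lemma1}, and I would invoke that computation.

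The main obstacle I anticipate is not any single hard inequality but bookkeeping: tracking the precise constant $C$ so that all the dependencies ($n$, $s$, $R$, $\mathrm{dist}(B,H^+)$, $\mathrm{dist}(K,H^+)$, $|K|$, $\mathrm{diam}(\Omega)$) come out exactly as stated, and in particular correctly producing the rational factor $\mathrm{dist}(B,H^+)^{n+2s}/(\mathrm{dist}(B,H^+)^{n+2s}+\gamma_{n,s}R^{2s})$ — which strongly hints that the argument does not merely bound $v$ below by the Poisson integral over $B$, but rather compares $v$ with a multiple of $\psi_B$ after noting that $\psi_B$ itself solves $(-\Delta)^s\psi_B = 1$ in $B$ and $\psi_B = 0$ outside, so that $w := v - c\,\psi_B$ satisfies $(-\Delta)^s w = -c \le 0$ in $B$ and one must check $w\ge 0$ outside $B$; the extra mass of $\psi_B$ "costs" exactly the denominator $\mathrm{dist}(B,H^+)^{n+2s}+\gamma_{n,s}R^{2s}$ when one estimates how much of the Poisson integral is needed to dominate the source term. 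I would set up $w=v-c\psi_B$ with $c$ chosen to be the right-hand side coefficient in \eqref{s3eq19}, verify $(-\Delta)^s w\le 0$ in $B$ and $w\ge 0$ in $\mathbb{R}^n\setminus B$ using the Poisson lower bound above to absorb the $-c$ from $(-\Delta)^s\psi_B$, and conclude $w\ge 0$ in $B$ by the maximum principle.
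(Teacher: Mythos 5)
Your proposal contains a genuine gap in both of the routes you sketch. In the primary (Poisson-kernel) route, the step ``discarding everything outside $K$ only decreases the integral'' is not justified: the set $\mathbb{R}^n\setminus B$ contains the reflected ball $Q(B)\subset H^+$, where $v=-v\circ Q\le 0$, and after the change of variables $y\mapsto y'$ its contribution to the representation is $-\int_B P_B(x,y')\,v(y)\,dy$. Your ``combined kernel'' $P_B(x,y)-P_B(x,y')$ is indeed nonnegative, but it only accounts for $y\in H^-\setminus B$; the leftover term is genuinely negative, of size comparable to $\psi_B(x)\,\sup_B v$, and $\sup_B v$ is not controlled by any hypothesis of the lemma, so it cannot be absorbed. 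In the fallback route, $w:=v-c\,\psi_B$ satisfies $(-\Delta)^s w\ge -c$ in $B$, i.e.\ $w$ is a \emph{subsolution} of the homogeneous equation, and a subsolution that is nonnegative outside $B$ need not be nonnegative inside; moreover $w=v\le 0$ on $H^+$, so the exterior condition fails anyway. Both problems point to the same missing ingredient: one must invoke the \emph{antisymmetric} weak maximum principle, and therefore work with an antisymmetric comparison function.

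The paper's proof does exactly this, with one further device you did not identify and which is the heart of the argument: the barrier is $w=\psi_B-\psi_{Q(B)}+\alpha\,(\mathbbm{1}_K-\mathbbm{1}_{Q(K)})$. The indicator term contributes, for $x\in B$, the negative quantity $-\alpha\, c_{n,s}\int_K\bigl(|x-y|^{-n-2s}-|x-y'|^{-n-2s}\bigr)\,dy$ to $(-\Delta)^s w$; choosing $\alpha\ge\kappa/C_1$ (with $\kappa$ bounding the positive contributions of $\psi_B$ and $-\psi_{Q(B)}$, which is where the factor $\mathrm{dist}(B,H^+)^{n+2s}+\gamma_{n,s}R^{2s}$ in the constant originates) makes $w$ a subsolution in $B$. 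Then $\tilde v:=v-\tau w$ with $\tau:=\inf_K v/\alpha$ is an antisymmetric supersolution in $B$ which is nonnegative on $H^-\setminus B$ --- on $K$ this is precisely the condition $v\ge\tau\alpha=\inf_K v$, which is how $\inf_K v$ enters the estimate --- and the antisymmetric weak maximum principle gives $v\ge\tau\psi_B$ in $B$. Your elementary kernel estimates (the lower bound on $|x-y|^{-n-2s}-|x-y'|^{-n-2s}$ producing $\mathrm{dist}(B,H^+)\,\mathrm{dist}(K,H^+)/\mathrm{diam}(\Omega)^{n+2s+2}$) are essentially the ones the paper uses to bound $\tau$ from below, but without the $\alpha\mathbbm{1}_K$ device there is no mechanism that converts the mass of $v$ on $K$ into a lower bound for $v$ inside $B$.
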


\begin{proof} 
%
%
%
We define
\begin{equation*}
    w(x):= \psi_B(x) - \psi_{Q(B)}(x) + \alpha \mathbbm{1}_K(x) - \alpha \mathbbm{1}_{Q(K)}(x) \quad \textrm{for }  \  x \in \mathbb{R}^n
\end{equation*}
where $\alpha > 0$ is  a parameter to be set later on, $\psi_B$ is the solution of the fractional torsion problem in $B$ and $ \mathbbm{1}_A$ is the characteristic function of a given set $A$. A direct computation shows that $w\in
\mathcal{D}^s(B)$.

The function $w$ is antisymmetric and for any nonnegative test function $\varphi \in H_0^s(B)$ we have
\begin{align*}
\mathcal{E} (w,\varphi ) &= \mathcal{E} (\psi_B,\varphi ) - \mathcal{E} (\psi_{Q(B)},\varphi ) + \alpha \, \mathcal{E} ( \mathbbm{1}_K,\varphi ) - \alpha \, \mathcal{E} ( \mathbbm{1}_{Q(K)},\varphi)\\
&= \int_B \varphi (x)dx + C(n,s) \int_B \int_{Q(B)} \frac{\psi_{Q(B)}(y) \varphi (x) }{|x-y|^{n+2s}}dydx\\
&- \alpha \  C(n,s) \int_B \int_K \frac{\varphi(x)}{|x-y|^{n+2s}}dydx + \alpha \  C(n,s) \int_B \int_{Q(K)} \frac{\varphi(x)}{|x-y|^{n+2s}}dydx\\
&\leq \int_B \varphi(x)dx \bigg[ \kappa - \alpha \ C(n,s) \int_K \bigg( \frac{1}{|x-y|^{n+2s}} - \frac{1}{|x-y'|^{n+2s}} \bigg)  \bigg],
\end{align*}
where
\begin{align*}
    \kappa = \kappa (n,s,B) =  1 + C(n,s) \,\vert B \vert  \sup_B \psi_B \sup_{x\in B, y \in H^+} \frac 1 {\vert x - y \vert^{n+2s}}<+\infty.
\end{align*}

If we set
\begin{equation}
\label{s3eq25}
    C_1 = C_1(n,s,K,B)= C(n,s) \  |K| \  \inf_{x\in B, y \in K} \bigg( \frac{1}{|x-y|^{n+2s}} - \frac{1}{|x-y'|^{n+2s}} \bigg) > 0,
\end{equation}
then
\begin{equation*}
    \mathcal{E} (w, \varphi) \leq \int_B \varphi (x) (\kappa - \alpha C_1).
\end{equation*}
By choosing $\alpha$ in such a way that $\kappa - \alpha C_1 \leq 0$, we get $(- \Delta)^s w \leq 0 $ in $B$.

For concreteness, we can thus choose
$$ \alpha:=\frac\kappa{C_1}$$
to have the previous argument in place and then set
$$
\tau \coloneqq \inf_K \frac{v}\alpha > 0
$$ 
and define 
$$
\tilde{v} (x) \coloneqq v(x) - \tau w(x)
$$ 
for every $x \in \mathbb{R}^n$. Recalling that $w$ is antisymmetric and that $w \equiv 0$ on $H^- \setminus (B \cup K)$ we have
\begin{equation*}
    \begin{cases}
(-\Delta)^s \Tilde{v} \geq 0 \quad & \text{in }   B\\
\Tilde{v} \geq 0 \quad & \text{in }   H^-\setminus B.
\end{cases}
\end{equation*}
{F}rom the weak maximum principle we then get that $\Tilde{v} \geq 0$ in $B$ and, in particular,
\begin{equation}
\label{s3eq28}
v \geq \tau \psi_B \quad \text{in }  B.
\end{equation}

For every $x \in B$ and every $y \in K$ we compute
\begin{equation*}
    \begin{split}
        \frac{1}{|x-y|^{n+2s}} - \frac{1}{|x-y'|^{n+2s}} 
        &= \frac{n+2s}{2}  \int_{|x-y|^2}^{|x-y'|^2}  t^{-\frac{n+2s+2}{2}} dt
        \\
&\geq \frac{n+2s}{2} \big( |x-y'|^2 - |x-y|^2 \big) |x-y'|^{-(n+2s+2)}
\\
& \geq\frac{n+2s}{2} 4x_1 y_1 |x-y'|^{-(n+2s+2)} .
    \end{split}
\end{equation*}
Moreover, for all~$x \in B$ and~$y \in K$, 
 $$ \vert x - y'\vert \leq  \mathrm{diam} (B) + \mathrm{diam} (K) +\mathrm{dist}(Q(K),B)$$
 and consequently
\begin{equation*}
        \frac{1}{|x-y|^{n+2s}} - \frac{1}{|x-y'|^{n+2s}} 
\geq
\frac{2(n+2s) \mathrm{dist}(B,H^+)\,\mathrm{dist}(K,H^+)}{\big(
\mathrm{diam} (B) + \mathrm{diam} (K) +\mathrm{dist}(Q(K),B)\big)^{n+2s+2}}.
\end{equation*} 
Hence, by~\eqref{s3eq25},
$$ C_1\ge\frac{2(n+2s) \,C(n,s) \,  |K| \,\mathrm{dist}(B,H^+)\,\mathrm{dist}(K,H^+)}{\big(
\mathrm{diam} (B) + \mathrm{diam} (K) +\mathrm{dist}(Q(K),B)\big)^{n+2s+2}.
}$$
As a result,
\begin{equation*}
    \tau =\frac{C_1}\kappa \inf_K v \geq \frac{2(n+2s) \,C(n,s) \,  |K| \,\mathrm{dist}(B,H^+)\,\mathrm{dist}(K,H^+)}{\kappa\,\big(
\mathrm{diam} (B) + \mathrm{diam} (K) +\mathrm{dist}(Q(K),B)\big)^{n+2s+2}} \inf_K v.
\end{equation*}
We also observe, owing to~\eqref{eq:explicit fractional torsion ball}, that
$$ \sup_B \psi_B (x) = \gamma_{n,s} \,R^{2s}
$$
and therefore
\begin{eqnarray*}\kappa&=&1 + C(n,s) \,\vert B \vert\, \gamma_{n,s} \,R^{2s}\, \sup_{x\in B, y \in H^+} \frac 1 {\vert x - y \vert^{n+2s}}\\&\le&
1 + \frac{C(n,s) \,\vert B \vert\, \gamma_{n,s} \,R^{2s}}{\mathrm{dist}(B,H^+)^{n+2s}}\\&=&
\frac{\mathrm{dist}(B,H^+)^{n+2s}+C(n,s) \,\vert B \vert\, \gamma_{n,s} \,R^{2s}}{\mathrm{dist}(B,H^+)^{n+2s}}
.\end{eqnarray*}
Accordingly,
\begin{equation*}
    \tau \geq \frac{2(n+2s) \,C(n,s) \,  |K| \,\mathrm{dist}(B,H^+)^{n+2s+1}\,\mathrm{dist}(K,H^+)}{
\big(\mathrm{dist}(B,H^+)^{n+2s}+C(n,s) \,\vert B \vert\, \gamma_{n,s} \,R^{2s}\big)    
    \big(
\mathrm{diam} (B) + \mathrm{diam} (K) +\mathrm{dist}(Q(K),B)\big)^{n+2s+2}} \inf_K v.
\end{equation*}
Thus, the desired conclusion follows from \eqref{s3eq28}.
\end{proof}

\section{Almost symmetry in one direction} \label{sect_approx_1d}

As customary, we say that a bounded domain $\Omega \subset \mathbb{R}^n$ satisfies the \emph{uniform interior ball condition} if there exists a radius $\rrr_\Omega > 0$ such that for every point $x_0 \in \partial \Omega$ we can find a ball $B_i \subset \Omega$ of radius $\rrr_\Omega$ with $\overline{B_i} \cap \Omega^c = \{ x_0 \}$.

In the next subsection, we collect some useful technical lemmas which hold true for domains satisfying such a condition.

\subsection{Preliminaries: some results for domains satisfying the uniform interior ball condition}

As noticed in \cite{CPY, MP2021}, the following simple explicit bound for the perimeter holds true.
\begin{lemma}[A general simple upper bound for the perimeter, \cite{CPY, MP2021}]
	\label{lem:upperboundperimeter}
	Let $D \subset \mathbb{R}^n$ be a bounded domain with boundary of class $C^{1,\alpha}$, with $0 < \alpha \le 1$. If $D$ satisfies the \emph{uniform interior ball condition} with radius $\rrr_D$, the we have that
	\begin{equation}\label{eq:upper bound perimeter}
		| \partial D | \le \frac{n | D | }{ \rrr_D }.
	\end{equation}
\end{lemma}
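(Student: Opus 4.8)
The plan is to prove the perimeter bound \eqref{eq:upper bound perimeter} by a foliation/coarea argument based on the distance function from the boundary. Concretely, for $t \in [0, \rrr_D]$ set $D_t := \{ x \in D \ : \ \mathrm{dist}(x, \partial D) > t \}$, so that $D_0 = D$. The uniform interior ball condition with radius $\rrr_D$ guarantees that, for each $t$ in this range, every boundary point $x_0 \in \partial D$ is the unique contact point of an interior ball of radius $\rrr_D$, and moving along the inward normal by $t$ produces a point whose distance to $\partial D$ is exactly $t$; conversely the nearest-point projection $\pi : \partial D_t \to \partial D$ is well defined. This sets up a Lipschitz (indeed, on the $C^{1,\alpha}$ boundary, bi-Lipschitz) correspondence between $\partial D$ and $\partial D_t$, and the key geometric fact is that this map is \emph{non-expanding}: since the centers of the interior balls lie at distance $\rrr_D$ from $\partial D$, for $0 \le t \le \rrr_D$ the inner parallel set contracts, so $\mathcal H^{n-1}(\partial D_t) \le \left( \frac{\rrr_D - t}{\rrr_D} \right)^{n-1} \mathcal H^{n-1}(\partial D)$, and in particular $|\partial D_t| \le |\partial D|$ for all $t \in [0, \rrr_D]$.

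Granting that monotonicity, the argument is a one-line coarea computation. By the coarea formula applied to the $1$-Lipschitz function $d(x) := \mathrm{dist}(x, \partial D)$,
\begin{equation*}
| D | \ \ge \ \int_D |\nabla d(x)| \, dx \ = \ \int_0^{\infty} \mathcal H^{n-1}\big( \{ d = t \} \big) \, dt \ \ge \ \int_0^{\rrr_D} |\partial D_t| \, dt,
\end{equation*}
where we used $|\nabla d| = 1$ a.e. and discarded the part of the integral for $t > \rrr_D$. Using the bound $|\partial D_t| \ge$ --- wait, here one wants a \emph{lower} bound on $|\partial D_t|$ to close the estimate, so the cleaner route is: instead of the crude contraction estimate, observe that for $t \in [0, \rrr_D]$ one actually has the \emph{reverse} containment of the interior-ball structure, giving $|\partial D_t| \ge \left(1 - \tfrac{t}{\rrr_D}\right)^{n-1} |\partial D|$ is false in general; rather, the honest and simplest fact is $|\partial D_t| \ge \frac{|\partial D| \,(\rrr_D - t)}{\rrr_D} \cdot (\text{something})$ --- the safe version used in \cite{CPY, MP2021} is the elementary inclusion $B_{\rrr_D - t}(x_c) \subset D_t$ for each interior-ball center $x_c$, from which, integrating the indicator, $|D| \ge \int_0^{\rrr_D} |\partial D_t|\, dt$ and $|\partial D_t| \ge |\partial D|\,\frac{\rrr_D - t}{\rrr_D}$ follows by the non-expansiveness of the outward normal flow \emph{when run backwards}; then $|D| \ge |\partial D| \int_0^{\rrr_D} \frac{\rrr_D - t}{\rrr_D}\, dt = \frac{\rrr_D\, |\partial D|}{2}$, which is off by a factor. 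The correct sharp constant $n$ instead comes from comparing with the ball: integrate $\mathcal{H}^{n-1}(\partial D_t) \ge |\partial D| \big(\tfrac{\rrr_D-t}{\rrr_D}\big)^{n-1}$, valid because the inward normal map from $\partial D$ onto $\partial D_t$ scales $(n-1)$-volume by at most $\big(\tfrac{\rrr_D-t}{\rrr_D}\big)^{n-1}$ \emph{and is surjective}, so $\mathcal{H}^{n-1}(\partial D_t)$ is at least that; then
\begin{equation*}
|D| \ \ge \ \int_0^{\rrr_D} \mathcal H^{n-1}(\partial D_t)\, dt \ \ge \ |\partial D| \int_0^{\rrr_D} \Big( \tfrac{\rrr_D - t}{\rrr_D} \Big)^{n-1} dt \ = \ \frac{\rrr_D \, |\partial D|}{n},
\end{equation*}
which rearranges exactly to \eqref{eq:upper bound perimeter}.

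I expect the main obstacle to be making the surjectivity and volume-scaling claim for the inward normal map $x_0 \mapsto x_0 - t\, \nu(x_0)$ fully rigorous at the stated low regularity. For boundary of class $C^{1,\alpha}$ the Gauss map $\nu$ is only $C^{0,\alpha}$, so $\partial D_t$ need not be $C^1$; one handles this by noting that the uniform interior ball condition already forces $\partial D$ to be (locally) a graph with one-sided curvature bounds in the viscosity/support sense, and that the signed-distance function $d$ is $C^{1,1}$ on the collar $\{0 < d < \rrr_D\}$ (a classical fact following from the interior ball condition), so $\partial D_t = d^{-1}(t)$ is a $C^{1,1}$ hypersurface for a.e. $t$ and the area formula applies to the flow map $\Phi_t(x_0) = x_0 - t\nu(x_0)$ with Jacobian $\prod_{i=1}^{n-1}(1 - t\kappa_i) \le 1$ whenever $\kappa_i \le 1/\rrr_D$. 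Surjectivity of $\pi : \partial D_t \to \partial D$ is where the interior ball condition is used a second time: given $x_0 \in \partial D$, the center $x_0 - \rrr_D \nu(x_0)$ of the touching ball has $d = \rrr_D$, and the segment to it hits $\partial D_t$. Since all of this is routine differential geometry once the $C^{1,1}$ regularity of the collar is invoked, and since the result is merely cited from \cite{CPY, MP2021}, it would suffice in the paper to record the coarea inequality $|D| \ge \int_0^{\rrr_D} |\partial D_t|\, dt$ together with the monotone scaling bound for $|\partial D_t|$ and refer to those works for the details.
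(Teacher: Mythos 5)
Your route is genuinely different from the paper's. The paper's proof is a two-line PDE argument: let $f$ solve $\Delta f = n$ in $D$ with $f=0$ on $\partial D$; the divergence theorem gives $n|D| = \int_{\partial D}\partial_\nu f\, d\mathcal{H}^{n-1}$, and a quantitative Hopf-type lemma (cited from \cite{MP}) gives $\partial_\nu f \ge \rrr_D$ pointwise under the uniform interior ball condition, whence the bound. Your coarea/inward-normal-flow argument is the classical geometric counterpart and does reach the sharp constant $n$, but as written the justification of the decisive inequality is stated backwards, and one of your intermediate claims is false.

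Concretely: (i) the ``contraction'' claim $\mathcal{H}^{n-1}(\partial D_t) \le \left(\frac{\rrr_D - t}{\rrr_D}\right)^{n-1}\mathcal{H}^{n-1}(\partial D)$ in your first paragraph fails for non-convex domains (take $D$ to be an annulus $B_2\setminus\overline{B_1}$: the inner boundary component expands under the inward flow); fortunately you do not end up using it. (ii) The inequality you do use, $\mathcal{H}^{n-1}(\{d=t\}) \ge \left(1-t/\rrr_D\right)^{n-1}|\partial D|$, is true, but your stated reason --- that $\Phi_t$ ``scales $(n-1)$-volume by at most $\left(\frac{\rrr_D-t}{\rrr_D}\right)^{n-1}$ and is surjective, so $\mathcal{H}^{n-1}(\partial D_t)$ is at least that'' --- is a non sequitur: an upper bound on the Jacobian of a surjection yields an \emph{upper} bound on the measure of the image. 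Likewise ``Jacobian $\prod_i(1-t\kappa_i)\le 1$'' is the wrong direction for your purposes and is in any case false where some $\kappa_i<0$. What is actually needed is: (a) $\Phi_t$ is a \emph{bijection} of $\partial D$ onto $\{d=t\}$ for $0<t<\rrr_D$ --- injectivity holds because a point at depth $t<\rrr_D$ with two distinct nearest boundary points $p\neq q$ would give, by the triangle inequality through the center $c=p+\rrr_D N(p)$ of the interior ball at $p$, that $|q-c|\le t+(\rrr_D-t)=\rrr_D$, forcing $q\in\overline{B_{\rrr_D}(c)}\cap D^c=\{p\}$, a contradiction; and (b) the Jacobian satisfies $\prod_i(1-t\kappa_i)\ge(1-t/\rrr_D)^{n-1}$ because the interior ball condition gives $\kappa_i\le 1/\rrr_D$, so every factor is $\ge 1-t/\rrr_D\ge 0$. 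With (a) and (b) the area formula gives the lower bound and your coarea computation closes the argument. There remains the regularity issue you flag ($C^{1,\alpha}$ provides no classical second fundamental form), which can be handled via Alexandrov a.e.\ twice-differentiability of the semiconcave boundary graphs --- but this is precisely the sort of technicality the paper's PDE proof avoids entirely.
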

\begin{proof}
	By following \cite{MP2021}, the desired bound can be easily obtained by considering the solution $f \in C^{1, \alpha}(\overline{D})$ to
	$$
	\Delta f = n \, \text{ in } D, \quad f = 0 \, \text{ on } \partial D ,
	$$
	and putting together the identity
	$$
	n |D| = \int_{ \partial D } \partial_\nu f \, d \mathcal{H}^{n-1} , \text{ where } \partial_\nu \text{ denotes the outer normal derivative}, 
	$$
	with the Hopf-type inequality 
	$$ \partial_\nu f \ge \rrr_D , $$
which can be found in \cite[Theorem 3.10]{MP}.

We mention that a more general version of the bound \eqref{eq:upper bound perimeter} remains true even without assuming the uniform interior ball condition, at the cost of replacing the radius $\rrr_D$ of the ball condition with a parameter associated to the (weaker) pseudoball condition, which is always verified by $C^{1,\alpha}$ domains: see \cite[Remark 1.1]{CPY} and the last displayed inequality in the proof of \cite[Corollary 2.1]{CPY}.
\end{proof}

The previous result is useful to prove the following.

\begin{lemma}
\label{s4lemma3}
Let $\Omega \subset \mathbb{R}^n$ be a bounded domain with $\partial \Omega$ of class $C^2$. For $\delta > 0$, we set 
\begin{equation}\label{def:Adelta}
A_{\delta} := \{ x \in \Omega \ | \ \mathrm{dist}(x,\partial \Omega) < \delta \} .
\end{equation}
Then, we have that
\begin{equation}
\label{s4eq31}
    |A_\delta| \leq c \, \delta , \quad \text{ with } \quad  c:= \frac{ 2 n  | \Omega | }{ \rrr_\Omega },
\end{equation}
where $\rrr_\Omega$ is the radius of the uniform interior ball condition of $\Omega$.
\end{lemma}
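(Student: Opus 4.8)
The plan is to exploit the uniform interior ball condition together with a covering argument, or more slickly, to use a comparison-function argument analogous to the proof of Lemma~\ref{lem:upperboundperimeter}. Since $\partial\Omega$ is of class $C^2$ it satisfies the uniform interior ball condition with some radius $\rrr_\Omega>0$, so the preceding lemma already provides the perimeter bound $|\partial\Omega|\le n|\Omega|/\rrr_\Omega$; the goal is to upgrade this to a bound on the measure of the tubular neighborhood $A_\delta$.

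First I would restrict to $\delta\le\rrr_\Omega$, which is the only interesting regime (for $\delta>\rrr_\Omega$ the bound $|A_\delta|\le 2n|\Omega|\delta/\rrr_\Omega\ge 2n|\Omega|>|\Omega|$ is trivial, since $2n/\rrr_\Omega\cdot\delta>2n\ge 1$ forces the right-hand side above $|\Omega|\ge|A_\delta|$). Then I would use the normal exponential map / nearest-point projection $\pi:A_\delta\to\partial\Omega$: because $\delta\le\rrr_\Omega$, every point of $A_\delta$ has a unique nearest boundary point, and the map $\Phi(y,t)=y-t\,\nu(y)$ from $\partial\Omega\times[0,\delta)$ onto $A_\delta$ is a bi-Lipschitz parametrization whose Jacobian is $\prod_{i=1}^{n-1}(1-t\kappa_i(y))$, where $\kappa_i$ are the principal curvatures of $\partial\Omega$. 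The interior ball condition of radius $\rrr_\Omega$ gives $\kappa_i\le 1/\rrr_\Omega$, hence for $0\le t\le\delta\le\rrr_\Omega$ each factor satisfies $|1-t\kappa_i(y)|\le 2$ (indeed $-1\le 1-t\kappa_i\le 1$ when $t\kappa_i\le 2$, and even the cruder $\le 2$ suffices), so the Jacobian is bounded by $2^{n-1}$. Integrating via the area formula,
\begin{equation*}
|A_\delta|=\int_{\partial\Omega}\int_0^\delta \prod_{i=1}^{n-1}\bigl(1-t\kappa_i(y)\bigr)\,dt\,d\mathcal{H}^{n-1}(y)\le 2^{n-1}\,\delta\,|\partial\Omega|\le 2^{n-1}\,\delta\,\frac{n|\Omega|}{\rrr_\Omega}.
\end{equation*}
This already gives a bound of the claimed form with constant $2^{n-1}n|\Omega|/\rrr_\Omega$; to match the stated constant $2n|\Omega|/\rrr_\Omega$ exactly, I would instead use the sharper elementary estimate that for $0\le t\le\delta\le\rrr_\Omega$ one has $0\le 1-t\kappa_i(y)\le 1$ whenever $\kappa_i\ge0$ and more generally the one-sided bound, giving $\prod(1-t\kappa_i)\le 1$ on the "convex" part; combined with the coarea identity $|A_\delta|=\int_0^\delta|\partial\Omega_\tau|\,d\tau$ for the inner parallel sets $\Omega_\tau$ and the monotone bound $|\partial\Omega_\tau|\le n|\Omega_\tau|/(\rrr_\Omega-\tau)\le\dots$, one cleans up the constant. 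Alternatively, and most cleanly, I would follow the comparison-function route: let $f$ solve $\Delta f=n$ in $\Omega$, $f=0$ on $\partial\Omega$; the Hopf bound $\partial_\nu f\ge\rrr_\Omega$ from \cite[Theorem 3.10]{MP} together with a gradient bound $|\nabla f|\le \mathrm{diam}/\dots$ and the layer-cake / coarea formula applied to $f$ yields $|\{f<c\}|$ controlled linearly, and $A_\delta\subset\{f<C\delta\}$ for a suitable $C$, closing the argument.

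The main obstacle is controlling the Jacobian of the normal map uniformly, i.e. making sure the principal curvatures do not blow up and that the exponential map stays injective on the relevant range of $t$ — both of which are exactly guaranteed by the $C^2$ regularity plus the uniform interior ball condition with radius $\rrr_\Omega$, and by restricting to $\delta\le\rrr_\Omega$. Once that is in place the integration is routine; the only remaining care is bookkeeping the numerical constant so that it comes out as $2n|\Omega|/\rrr_\Omega$ rather than a larger dimensional constant, which I would handle either by the refined curvature estimate or by passing through the inner-parallel-set perimeter bound derived from Lemma~\ref{lem:upperboundperimeter} applied to $\Omega_\tau$ (which still satisfies an interior ball condition of radius $\rrr_\Omega-\tau$, but one can absorb the factor of $2$ using $\tau\le\delta\le\rrr_\Omega/2$ on the bulk and treat the thin remaining layer separately).
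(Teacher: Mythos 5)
Your main argument---bounding the Jacobian of the normal map pointwise by $2^{n-1}$---has a genuine gap: the uniform interior ball condition gives only the one-sided bound $\kappa_i \le 1/\rrr_\Omega$ on the principal curvatures with respect to the outer normal, and this yields $1 - t\kappa_i \ge 1 - t/\rrr_\Omega \ge 0$, i.e.\ a \emph{lower} bound on each Jacobian factor, not an upper bound. An upper bound on $1 - t\kappa_i$ requires a lower bound on $\kappa_i$, that is, an exterior ball condition, which is not quantified by $\rrr_\Omega$. Concretely, for the annulus $B_1 \setminus \overline{B_\epsilon}$ one has $\rrr_\Omega = (1-\epsilon)/2$ bounded away from $0$, while on the inner boundary $\kappa_i = -1/\epsilon$, so $1 - t\kappa_i = 1 + t/\epsilon$ blows up as $\epsilon \to 0$. (The conclusion of the lemma survives because the perimeter of the concave portion is correspondingly small, but that is exactly the cancellation a pointwise Jacobian bound cannot see.) The same objection applies to your ``sharper'' variant, which only treats the factors with $\kappa_i \ge 0$. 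The comparison-function alternative via $\Delta f = n$ is too vague to assess as written: the inclusion $A_\delta \subset \{ f < C\delta \}$ needs a gradient bound on $f$, which would introduce dependencies beyond $n$, $|\Omega|$ and $\rrr_\Omega$.

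The route that does work is the one you relegate to your final sentence, and it is precisely the paper's proof: for $0 \le t \le \delta \le \rrr_\Omega/2$ the inner parallel set $V_t := \{ x \in \Omega \ | \ \mathrm{dist}(x,\partial\Omega) > t \}$ satisfies the uniform interior ball condition with radius $\rrr_\Omega/2$ and has boundary $\Gamma_t$ of class $C^2$ (since $d_{\partial\Omega} \in C^2(A_{\rrr_\Omega})$), so Lemma~\ref{lem:upperboundperimeter} applied to $V_t$ gives $|\Gamma_t| \le 2n|V_t|/\rrr_\Omega \le 2n|\Omega|/\rrr_\Omega$; the coarea formula $|A_\delta| = \int_0^\delta |\Gamma_t|\,dt$ then yields the claim, and the regime $\delta \ge \rrr_\Omega/2$ is disposed of by $|A_\delta| \le |\Omega| \le (2|\Omega|/\rrr_\Omega)\,\delta$. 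Note that this perimeter bound for $\Gamma_t$ comes from the Hopf-lemma argument behind Lemma~\ref{lem:upperboundperimeter}, not from any pointwise curvature control, which is exactly why it tolerates arbitrarily concave portions of the boundary. If you rewrite your proof around that mechanism (and drop the pointwise Jacobian estimate), it closes correctly.
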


We recall that if a domain has boundary of class $C^2$, then it satisfies a uniform interior ball condition.

\begin{proof}[Proof of Lemma \ref{s4lemma3}]
We set $d_{\partial \Omega}(x) := \mathrm{dist}(x,\partial \Omega)$ for $x \in \Omega$. For $\delta \ge 0$, we define 
$$
V_\delta := \{ x \in \Omega \ | \ d_{\partial \Omega}(x) > \delta \}
\quad \text{ and } \quad
\Gamma_\delta := \{ x \in \Omega \ | \ d_{\partial \Omega}(x) = \delta \} \,.
$$
It is well-known that $d_{\partial \Omega} \in C^2(A_{ \rrr_\Omega })$ (see, e.g., \cite[Lemma 14.16]{gilbarg2015elliptic}).
%
%

We first prove the claim in the case $0 \le \delta \le \rrr_\Omega / 2$.
{F}rom the coarea formula we obtain
\begin{equation}
\label{s4eq32}
    |A_{\delta}| = \int_{A_\delta} 1 \, dx = \int_{A_\delta} |\nabla d_{\partial \Omega}(x)| \, dx = \int_{0}^{\delta} \bigg( \int_{A_{\delta} \cap d_{\partial \Omega}^{-1}(t)} d\mathcal{H}^{n-1} \bigg) \, dt = \int_0^\delta |\Gamma_t| \, dt.
\end{equation}
Since $t \le \delta \le \rrr_\Omega / 2$, we have that $V_t$ is a bounded domain satisfying the uniform interior touching ball condition with radius $\rrr_\Omega /2$, and with boundary $\Gamma_t$ of class $C^2$. Thus, we can apply Lemma \ref{lem:upperboundperimeter} with $D:=V_t$ to get that
\begin{equation}\label{eq:upperbound gammat}
|\Gamma_t| \le \frac{2  n |V_t|}{\rrr_\Omega} \le \frac{2 n | \Omega |}{\rrr_\Omega} ,
\end{equation}
where the last inequality follows by the inclusion $V_t \subseteq \Omega$.
Combining \eqref{s4eq32} with \eqref{eq:upperbound gammat} immediately gives \eqref{s4eq31}, for any $0 \le \delta \le \rrr_\Omega / 2$.

On the other hand, if $\delta \ge \rrr_\Omega / 2$, we easily find that
$$
|A_\delta| \le | \Omega | \le \left[ \frac{2 |\Omega | }{ \rrr_\Omega } \right] \delta ,
$$
where the first inequality follows by the inclusion 
$$
A_\delta \subseteq \Omega , \text{ for any } \delta \ge 0 .
$$
Thus, \eqref{s4eq31} still holds true.
\end{proof}

We now detect an optimal growth of the solution to \eqref{s1eq4} from the boundary, by generalizing \cite[Lemma 3.1]{MP2} to the fractional setting.
	\begin{lemma}
		\label{lem:relationdist_fractional optimal growth}
		Let $u$ satisfy~\eqref{s1eq4} and let $\gamma_{n,s}$ be the constant defined in \eqref{eq:def constant gamma ns}.
		%
		%
		Then,
		\begin{equation}\label{instgr}
			u(x) \ge 
			\gamma_{n,s} \,\mathrm{dist}(x, \partial \Omega)^{2s} 
			\quad \mbox{ for every } \ x \in \Omega .
		\end{equation}
		
		Moreover, if $\Omega$ is of class~$C^1$
		and satisfies the uniform interior sphere condition with radius $\rrr_\Omega$, then it holds that
		\begin{equation}
			\label{eq:relationdist}
			u(x) \ge \gamma_{n,s} \, \rrr_\Omega^s \,\mathrm{dist}(x, \partial \Omega)^s  \quad \mbox{ for every } \ x \in  \Omega .
		\end{equation}
	\end{lemma}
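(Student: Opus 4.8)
The plan is to exploit comparison with the explicit torsion function on balls, using the fact that $(-\Delta)^s$ satisfies the comparison principle and that a ball sits inside $\Omega$ near any given point. For the first estimate \eqref{instgr}, fix $x_0 \in \Omega$, set $d := \mathrm{dist}(x_0,\partial\Omega)$, and let $B := B_d(x_0) \subset \Omega$. Consider the solution $\psi_B$ of \eqref{s1eq4} with $\Omega$ replaced by $B$, namely $\psi_B(x) = \gamma_{n,s}(d^2 - |x-x_0|^2)_+^s$ as in \eqref{eq:explicit fractional torsion ball}. Since $(-\Delta)^s u = 1 = (-\Delta)^s \psi_B$ in $B$, while $u \ge 0 = \psi_B$ in $\mathbb{R}^n\setminus B$ (recall $u\ge 0$ in $\Omega$ by the maximum principle, since $(-\Delta)^s u = 1 > 0$ and $u=0$ outside $\Omega$), the comparison principle for the Dirichlet problem on $B$ yields $u \ge \psi_B$ in $B$. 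Evaluating at the center gives $u(x_0) \ge \psi_B(x_0) = \gamma_{n,s} d^{2s}$, which is precisely \eqref{instgr} since $x_0$ was arbitrary.

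For the refined estimate \eqref{eq:relationdist} under the uniform interior sphere condition, the idea is the same but one uses a ball of the fixed radius $\rrr_\Omega$ rather than the (possibly smaller) ball of radius $d_{\partial\Omega}(x)$. Given $x_0 \in \Omega$, pick a point $p \in \partial\Omega$ realizing the distance $d := d_{\partial\Omega}(x_0)$ and let $B := B_{\rrr_\Omega}(y_0) \subset \Omega$ be the interior ball touching $\partial\Omega$ at $p$; then $x_0$ lies on the segment from $p$ to $y_0$, at distance $\rrr_\Omega - d$ from the center $y_0$ (at least when $d \le \rrr_\Omega$; for $d > \rrr_\Omega$ one still has $d \le \mathrm{diam}$, and a ball of radius $\rrr_\Omega$ centered near $x_0$ works, or one simply notes the stronger \eqref{instgr} already dominates). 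Comparison on $B$ as above gives
\begin{equation*}
u(x_0) \ge \psi_B(x_0) = \gamma_{n,s}\big(\rrr_\Omega^2 - (\rrr_\Omega - d)^2\big)^s = \gamma_{n,s}\big(2\rrr_\Omega d - d^2\big)^s \ge \gamma_{n,s}\,\rrr_\Omega^s\, d^s,
\end{equation*}
where the last step uses $2\rrr_\Omega d - d^2 = d(2\rrr_\Omega - d) \ge d\,\rrr_\Omega$ for $0 \le d \le \rrr_\Omega$. This is \eqref{eq:relationdist}.

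The only genuinely delicate point is the justification of the comparison step $u \ge \psi_B$ in $B$: one must phrase it in the weak/variational framework set up earlier, noting that $u$ restricted to the relevant class is an entire supersolution of $(-\Delta)^s w = 1$ on $B$ (because $(-\Delta)^s u = 1$ in $\Omega \supset B$ and $u \ge 0 \ge \psi_B$ outside $B$), so the difference $u - \psi_B$ is a supersolution of the homogeneous equation on $B$ that is nonnegative outside $B$, and the weak maximum principle yields nonnegativity on all of $B$. The nonnegativity $u \ge 0$ itself follows from the same maximum principle applied to $u$ on $\Omega$. Everything else is the elementary algebra displayed above; I expect no real obstacle beyond being careful that the geometric picture (the touching interior ball, the position of $x_0$ on the radius) is stated correctly, which is the "technical observation of geometric type" the introduction defers to Appendix~\ref{APP:A}.
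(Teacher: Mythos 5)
Your proof is correct and takes essentially the same route as the paper's: comparison with the explicit torsion function $\psi_B$ on the inscribed ball $B_d(x)$ for \eqref{instgr}, and on the interior touching ball of radius $\rrr_\Omega$ (with the case $d\ge \rrr_\Omega$ handled by \eqref{instgr}) for \eqref{eq:relationdist}, the final algebra being the same factorization $\bigl(\rrr_\Omega^2-(\rrr_\Omega-d)^2\bigr)^s\ge \rrr_\Omega^s d^s$ written in different variables. (One incidental remark: Appendix~\ref{APP:A} concerns Minkowski sums used in Lemma~\ref{s4lemma2}, not the touching-ball geometry here, which the paper simply states directly.)
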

	
	\begin{proof}
		Let~$x \in \Omega$ and set~$r:=\mathrm{dist} (x, \partial \Omega)$.
		We consider
		$$
		\psi (y) := \gamma_{n,s} \left(  r^2 - | y - x |^2 \right)_+^s ,
		$$
		which satisfies the fractional torsion problem in $B_r(x)$, namely
		\begin{equation}\label{eq:torsionball}
			\begin{cases}
				(- \Delta)^s \psi = 1 \quad & \text{ in } B_r (x), 
				\\
				\psi =0 \quad & \text{ on } \mathbb{R}^n \setminus B_r (x) .
			\end{cases}
		\end{equation}
		By the comparison principle (see~\cite[Remark 3.2]{fall2015overdetermined}), we have that $ u \ge \psi$ on $\overline{B_r (x)}$.
		In particular, at the center $x$ of $B_r (x)$, we have that  
		$$
		u(x) \ge \psi (x) = \gamma_{ n , s }\,\mathrm{dist}(x, \partial \Omega )^{2s} ,
		$$ 
		and \eqref{instgr} follows.
		
		Notice that~\eqref{eq:relationdist} follows
		from~\eqref{instgr} if~$\mathrm{dist} (x, \partial \Omega ) \ge \rrr_\Omega $.
		Hence, from now on, we can suppose that
		\begin{equation}\label{eq62347on}
			\mathrm{dist} (x, \partial \Omega ) < \rrr_\Omega .
		\end{equation}
		Let $\bar x$ be the closest point in $\partial \Omega $ to $x$ and call $\tilde B \subset \Omega$
		the ball of radius $\rrr_\Omega$ touching~$\partial \Omega $ at~$\bar x$ and containing $x$.
		Up to a translation, we can always suppose that 
		\begin{equation}\label{8686s2}
			{\mbox{the center of the ball $\tilde B$ is the origin.}}
		\end{equation}
		Now, we let~$\tilde{\psi}$ 
		be the solution of \eqref{eq:torsionball} in $\tilde B$, that is
		$\tilde{\psi} (y):= \gamma_{n,s} \left(  \rrr_\Omega^2 - |y|^2 \right)_+^s$.
		By comparison (\cite[Remark 3.2]{fall2015overdetermined}), we have that $u \ge \tilde{\psi}$ in~$\tilde B$, and hence, being $x\in \tilde B$,
		\begin{equation}\label{817928y3r}
			u(x) \ge \gamma_{n,s} \, ( \rrr_\Omega^2 - |x|^2 )_+^s =
			\gamma_{n,s} \,( \rrr_\Omega + |x| )^s ( \rrr_\Omega -|x|)_+^s \ge \gamma_{n,s} \, \rrr_\Omega^s  \, ( \rrr_\Omega - |x|)^s .
		\end{equation}
		Moreover, from~\eqref{8686s2},
		$$ \rrr_\Omega -|x|=\mathrm{dist} (x, \partial \Omega).$$
		This and~\eqref{817928y3r}
		give~\eqref{eq:relationdist}, as desired.
	\end{proof}

\subsection{Almost symmetry in one direction}\label{subsec:Almost symmetry in one direction}
{F}rom now on, we let $\Omega \coloneqq G + B_R(0)$, with $G \subseteq \mathbb{R}^n$ bounded, with $\partial G$ of class $C^1$ and $\partial \Omega$ of class $C^2$.
%
%
\begin{remark}[On the constants in the quantitative estimates]
	{\rm
The constants in all of our quantitative estimates can be explicitly computed and only depend on $n$, $s$, $R$, and $\mathrm{diam}(\Omega)$. In some of the intermediate results, the parameter $| \Omega |$ may appear. It is clear that such a parameter can be removed thanks to the bounds
\begin{equation}\label{eq:trivial bounds volume}
	\frac{\omega_n}{n} R^n \le | \Omega | \le \frac{\omega_n}{n} \mathrm{diam}(\Omega)^n , \text{ where } \frac{\omega_n}{n} \text{ is the volume of the unit ball in } \mathbb{R}^n,
\end{equation}
which easily hold true in light of the monotonicity of the volume with respect to inclusion.

We remark that the estimates of the previous subsection also depend on the radius $\rrr_\Omega$ of the uniform interior ball condition associated to $\Omega$. Nevertheless, from now on, we have that
\begin{equation}\label{eq:oss relationraggiointerno e R}
\rrr_\Omega := R ,
\end{equation}
by the definition of $\Omega \coloneqq G + B_R(0)$
}
\end{remark}

\medskip

We apply the method of moving planes to the set $G$. Hence, we fix a direction $e=e_1$ and assume the associated critical hyperplane to be $T = \{x_1 = 0\}$, with $Q: \mathbb{R}^n \to \mathbb{R}^n, x \mapsto x'$ the reflection with respect to $T$. For the proofs of the next two lemmas we will use the following notation: we set for $t \geq 0$
\begin{equation}\label{NOTAZ}
    G_t \coloneqq G + B_t(0), \quad \widehat{G_t} \coloneqq G_t \cap H^+, \quad 
    G_t^- \coloneqq G_t\cap H^- \quad U_t \coloneqq Q(\widehat{G_t}).
\end{equation}
Note that $\Omega = G_R$. \\

Let $u \in C^2(\Omega) \cap C(\mathbb{R}^n)$ be a solution of \eqref{s1eq4}. For every $x \in \mathbb{R}^n$, we set 
$$
v(x):= u(x) - u(x') \,.
$$

%
%

\begin{lemma}
\label{s4lemma1}
Given $P \in U_R$ with $B=B_{R/8} (P)$ such that $\mathrm{dist}(B, \partial U_R) \ge R/8$,
we have that
\begin{equation}
\label{NEWs4eq3}
|\Omega^- \setminus U_R| \leq \Tilde{C} \, v(P)^{\frac{1}{2+s}} ,
\end{equation}
where $\Tilde{C} > 0$ is an explicit constant depending only on $n$, $s$, $R$, and $\mathrm{diam}(\Omega)$.
\end{lemma}

\begin{proof}
For $\delta \ge 0$, we set $K_\delta := (\Omega^- \setminus U_R) \setminus (E_\delta \cup F_\delta)$,
where
$$
E_\delta := A_\delta \cap (\Omega^- \setminus U_R) \quad \text{ with } A_\delta \text{ as defined in \eqref{def:Adelta}} ,  
$$
$$
F_\delta:= \{ x \in \Omega^- \setminus U_R \, : \, \mathrm{dist}(x, T) < \delta  \} .
$$

With our choice of $B$ clearly $\mathrm{dist} (B,H^+) \geq \mathrm{dist} (B,\partial U_R) \ge R/8$ and therefore, an application of Lemma \ref{s3lemma2} with $B := B_{R/8} (P)$ and $K := K_\delta$ gives that 
%
%
\begin{equation}\label{s4eq4}
	v \geq \overset{\star}{C} \,  \big[  \mathrm{dist} (K_\delta ,H^+) \, | K_\delta | \, \inf_{K_\delta} v \big] \psi_B \quad in \ B,
\end{equation}
holds true for a suitable explicit~$\overset{\star}{C}  > 0$, depending only on $n$, $s$, $R$, and $\mathrm{diam}(\Omega)$. Here, we used that in the present situation $K \subset \Omega$ and $B \subset U_R$.
 
Now looking at $K_\delta$ we have $K_\delta \subseteq (\Omega^- \setminus U_R) \subseteq H^-$ and so $\mathrm{dist} (K_\delta,B) \ge R/8$. Moreover, since $K_\delta \subseteq (G_{R-\delta}^- \setminus U_R)$ we have that $v(x) = u(x) > 0$ for every $x \in K_\delta$;
hence, \eqref{eq:relationdist} and \eqref{eq:oss relationraggiointerno e R} give that 
\begin{equation}\label{eq:crescita di v dal bordo}
\inf_K v \geq \left[ \gamma_{n,s} R^s \right] \, \delta^s .
\end{equation}
Also, since $K_\delta \subseteq (\Omega^- \setminus U_{R}) \setminus F_\delta$, then 
\begin{equation}\label{eq:dist Kdelta da H+}
\mathrm{dist} (K_\delta,H^+) \geq \delta .
\end{equation}

Clearly, 
\begin{equation*}
 |K_\delta| = |\Omega^- \setminus U_R| - |E_\delta \cup F_\delta| \ge |\Omega^- \setminus U_R| - (|E_\delta| + | F_\delta |) .
 \end{equation*}
Since $E_\delta \subseteq A_\delta$, Lemma \ref{s4lemma3} gives that 
$$ |E_\delta| \le \left[\frac{2 n |\Omega|}{R} \right]  \delta , $$ 
where we also used \eqref{eq:oss relationraggiointerno e R}. Also, by definition of $F_\delta$, it is trivial to check that
\begin{equation*}
	|F_\delta| \le  \mathrm{diam}( \Omega )^{n-1} \delta .
\end{equation*}
Putting together the last three displayed formulas we conclude that
\begin{equation}\label{eq:stima intermedia volume Kdelta}
	|K_\delta| \ge |\Omega^- \setminus U_R| - \tilde{c} \,  \delta , 
	\quad \text{ with } \quad
	\tilde{c} := \frac{2 \omega_n \mathrm{diam}(\Omega)^n }{ R } +  \mathrm{diam}( \Omega )^{n-1} .
\end{equation}
Here, we also used the second inequality in \eqref{eq:trivial bounds volume} to remove the dependence on $| \Omega |$ in the constant $\tilde{c}$.

Putting together \eqref{s4eq4}, \eqref{eq:crescita di v dal bordo}, \eqref{eq:dist Kdelta da H+}, \eqref{eq:stima intermedia volume Kdelta}, and that $\psi_B (P) = \gamma_{n,s} (R/8)^{2s} $ (by \eqref{eq:explicit fractional torsion ball} with $x_0 := P$), we get that
\begin{equation*}
    v(P) \geq \overset{\star \star}{C} \, \delta^{1+s} \big( |\Omega^- \setminus U_R| - \tilde{c} \, \delta \big)  \quad \text{with} 
\quad \overset{\star \star}{C} := \overset{\star}{C} \, \left[ \gamma_{n,s} R^s \right] \, (R/8)^{2s} \, \gamma_{n,s} ,
\end{equation*}
that is:
\begin{equation*}
|\Omega^- \setminus U_R| \le \frac{v(p)}{ \overset{\star \star}{C} } \delta^{-(1+s)} + \tilde{c} \, \delta.
\end{equation*}
By minimizing in $\delta$ the right-hand side of the last inequality, we can conveniently choose
\begin{equation}
\delta := \left[ \frac{ (1+s)  v(p)}{\overset{\star \star}{C} \, \tilde{c} } \right]^{ \frac{1}{2+s} }
\end{equation}
and obtain that \eqref{NEWs4eq3} holds true with
$$\Tilde{C}:= \left[ \frac{ (1+s) \tilde{c}^{1+s} }{\overset{\star \star}{C}  } \right]^{ \frac{1}{2+s} } .$$
\end{proof}

The next lemma uses the previous result to get a stability estimate in one specific direction.

\begin{lemma}[Almost symmetry in one direction]
\label{s4lemma2}
We have that 
\begin{equation}
\label{s4eq6}
    |\Omega \setminus Q(\Omega)| \ \leq \overline{C} \, [ u ]_{\partial G}^{\frac{1}{ 2+s }} ,
\end{equation}
where $\overline{C} > 0$ is an explicit constant only depending on $n$, $s$, $R$, and $\mathrm{diam}(\Omega)$.
\end{lemma}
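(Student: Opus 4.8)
The plan is to bound $|\Omega \setminus Q(\Omega)|$ by combining Lemma \ref{s4lemma1} with an estimate that controls $v(P)$ in terms of $[u]_{\partial G}$. First I would observe that by the definitions in \eqref{NOTAZ}, modulo null sets one has $\Omega \setminus Q(\Omega) = \Omega^- \setminus U_R$ (up to the obvious reflection), so it suffices to estimate $|\Omega^- \setminus U_R|$. The starting point is \eqref{s4eq3}: $|\Omega^- \setminus U_R| \le \tilde C(\delta^{-(1+s)} v(P) + \delta)$ for a suitable choice of ball $B = B_{R/8}(P) \subset U_R$ with $\mathrm{dist}(B,\partial U_R) \ge R/8$. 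The existence of such a point $P$ (for $U_R$ nonempty and not too thin) is where the Minkowski-sum structure $\Omega = G + B_R$ is essential: since $\widehat{G_R}$ contains a translate of $B_R$ near any interior point of $\widehat{G}$, its reflection $U_R = Q(\widehat{G_R})$ contains a ball of radius $R/8$ well inside itself; this is the kind of geometric fact relegated to Appendix~\ref{APP:A}.

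The crucial new input is a bound $v(P) \le C [u]_{\partial G}^\theta$ for an appropriate exponent. Here I would use the overdetermined-type information: $v(x) = u(x) - u(x')$, and $P$ (or rather its reflection) can be chosen to lie on or near $\partial G$, where $u$ is almost constant with oscillation controlled by $[u]_{\partial G}$. More precisely, if $P' \in \partial G$ then $|v(P')| = |u(P') - u(P'')| \le [u]_{\partial G}\,\mathrm{diam}(\Omega)$ trivially, but $P$ itself is an interior point of $U_R$ at distance $\ge R/8$ from $\partial U_R$, so I must transfer the smallness of $v$ on $\partial G$ to the interior point $P$. This is done via interior regularity / Harnack-type control: $v$ is $s$-harmonic in $U_R$ (it solves $(-\Delta)^s v = 0$ there since both $u(x)$ and $u(x')$ solve $(-\Delta)^s(\cdot) = 1$ in $U_R \subset \Omega \cap Q(\Omega)$), antisymmetric, and nonnegative in $H^-$; combining the boundary behavior of $v$ near the parallel surface $\partial G$ (where the Lipschitz oscillation bound applies) with Lemma \ref{s3lemma1} or the global bound $\sup_{B^+_{R/2}} v \le M v(\hat x)$ from the propositions should yield $v(P) \le C(n,s,R,\mathrm{diam}\,\Omega)\,[u]_{\partial G}$.

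Once we have both $|\Omega^- \setminus U_R| \le \tilde C(\delta^{-(1+s)} v(P) + \delta)$ and $v(P) \le C[u]_{\partial G}$, the final step is to optimize in $\delta$: the right-hand side is $\tilde C(C[u]_{\partial G}\,\delta^{-(1+s)} + \delta)$, minimized (up to constants) at $\delta \sim [u]_{\partial G}^{1/(s+2)}$, giving $|\Omega^- \setminus U_R| \le \overline C\,[u]_{\partial G}^{1/(s+2)}$. One small caveat: if $[u]_{\partial G}$ is so large that the optimal $\delta$ exceeds $\mathrm{diam}(\Omega)$ the estimate is trivial since $|\Omega\setminus Q(\Omega)| \le |\Omega| \le \frac{\omega_n}{n}\mathrm{diam}(\Omega)^n$, so one handles that range separately by adjusting $\overline C$. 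The exponent $\frac{1}{s+2}$ in \eqref{s4eq6} is then exactly what propagates into \eqref{s1eq8} of Theorem~\ref{theorem2}.

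The main obstacle I anticipate is the bound $v(P) \le C[u]_{\partial G}$: one needs to carefully locate $P$ (via the geometry of the Minkowski sum) so that its reflection sits close to the parallel surface $\partial G$ where $u$ has small oscillation, and then control $v$ in the interior. Getting the constant to depend only on $n$, $s$, $R$, $\mathrm{diam}(\Omega)$ — and not, say, on a modulus of continuity of $\partial G$ — requires using the $s$-harmonicity of $v$ in the $R$-neighborhood structure rather than any finer regularity of $G$, which is precisely why the boundary Harnack machinery of Section~\ref{HARSEC} was developed.
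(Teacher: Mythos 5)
Your overall architecture matches the paper's: identify $\Omega \setminus Q(\Omega)$ with $\Omega^- \setminus U_R$ up to a null set, feed a bound on $v$ at a well-placed interior point into Lemma \ref{s4lemma1}, and optimize over $\delta$ with $\delta = [u]_{\partial G}^{1/(s+2)}$. That skeleton, and the final balancing step, are correct.

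The genuine gap is in the step you yourself flag as the ``crucial new input,'' namely the bound $v(P)\le C\,[u]_{\partial G}$. You assert that $P$ ``can be chosen to lie on or near $\partial G$'' and that if $P'\in\partial G$ then $|v(P')|=|u(P')-u(P'')|\le [u]_{\partial G}\,\mathrm{diam}(\Omega)$ ``trivially.'' This is not trivial and is in general false: $[u]_{\partial G}$ is a seminorm on $\partial G$ only, so it controls $u(P')-u(P'')$ only when \emph{both} $P'$ and its reflection $P''$ lie on $\partial G$. For a generic point of $\partial G$ the reflection across $T$ lands in the interior of $\Omega$ (or outside), and neither the Minkowski-sum structure nor ``closeness to $\partial G$'' gives any control on $v$ there. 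The existence of a point where $v$ is genuinely small is exactly what the criticality of the hyperplane $T$ provides, through the Serrin dichotomy: either (Case 1) $Q(\widehat{\Omega})$ is internally tangent to $\partial\Omega$ off $T$, which forces the existence of $P\in\partial G\cap\partial U_0$ whose reflection $P'$ also lies on $\partial G$, so that $v(P)=u(P)-u(P')$ is directly controlled by $[u]_{\partial G}$ (and, when $P$ is close to $T$, by $[u]_{\partial G}$ times $|P-P'|=2|P_1|$, which is what makes the ratio $v(P)/(-P_1)$ bounded and lets Lemma \ref{s3lemma1} transport the estimate to the fixed interior point $\overline P=(-R/4,P_2,\dots,P_n)$); or (Case 2) $T$ is orthogonal to $\partial\Omega$, and one works at the corresponding point $Q\in\partial G\cap T$ using the tangential information along the segment $\{(y_1,Q_2,\dots,Q_n)\}$ together with the boundary Harnack inequality. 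Your proposal never invokes this dichotomy, so the point $P$ with $v(P)\lesssim [u]_{\partial G}$ is simply postulated rather than produced; without it the argument does not close.

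A secondary inaccuracy: Appendix \ref{APP:A} is not about $U_R$ containing a ball of radius $R/8$ deep inside itself; it establishes the inclusion $B_R(x)\subset U_R\cup[\Omega\cap(H^+\cup T)]$ for $x\in\overline{U_0}$, which is what legitimizes applying the antisymmetric boundary Harnack Lemma \ref{s3lemma1} in balls centered on $T$ (where one needs $v$ to be $s$-harmonic in the whole ball, not just in its negative half). This is a detail, but it is the detail that makes the Harnack transfer in Cases 1b and 2 licit.
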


\begin{proof}
We apply the method of moving planes to $G$ in the direction $e_1$. We need to distinguish between some cases.
\\
\textbf{Case 1 -} $U_0$ is internally tangent to $G$ at a point $P$ which is not on $T$. We distinguish two subcases, according to the distance of $P$ from $T$.

\textbf{Case 1a -} We assume $\mathrm{dist}(P,T) > R/8 $. Since $P \in \partial G \cap \partial U_0$ we have
\begin{equation*}
    v(P) = u(P) - u(P') \leq  [u]_{\partial G} \  \mathrm{diam} (\Omega).
\end{equation*}
We then apply Lemma \ref{s4lemma1} 
%
%
to obtain that
\begin{equation*}
|\Omega^- \setminus U_R| \leq  \Tilde{C} \,\mathrm{diam}(\Omega)^{\frac{1}{2+s}} [u]_{\partial G}^{\frac{1}{2+s}}. 
\end{equation*}
%

\textbf{Case 1b -} $P \in \partial G \cap \partial U_0$ such that $\mathrm{dist}(P,T) \leq R/8$. 

{F}rom the definitions of $v$ and $[u]_{\partial G}$, we have that
\begin{equation}
	\label{eq:nuova per caso 1b}
	  \frac{v(P)}{(-P_1)} = \frac{2 ( u(P) - u(P') ) }{\mathrm{dist} (P, P') } \leq 2  [u]_{\partial G} ,
\end{equation}
where we adopted the notation $P=(P_1, P_2, \dots, P_n)$.

As noticed in item (ii) of Lemma \ref{lem: a technical simple lemma}, we have that $B_R(P) \subset U_R \cup \left[ \Omega \cap ( H^+ \cup T )\right]$.

We set $\widehat{P} \coloneqq (0, P_2, \dots, P_n)$ the projection of $P$ on the hyperplane $T$. We then set $\overline{P} := ( - R/4 , P_2, \dots, P_n)$ so that $- \overline{P}_1 = \mathrm{dist}(\overline{P},T) = R/4$. Using Lemma \ref{s3lemma1} with $B_R:= B_{R/2}( \widehat{P} )$, we see that
\begin{equation}
\label{s4eq10}
    \frac{ 4 }{ R } \,  v(\overline{P}) = \frac{v(\overline{P})}{( - \overline{P})_1} \leq K \  \frac{v(P)}{ ( - P_1 ) }.
\end{equation}
Putting together \eqref{eq:nuova per caso 1b} and \eqref{s4eq10} gives that
$$
v(\overline{P}) \le \frac{R}{2} K [u]_{\partial G} ,
$$
and hence an application of Lemma \ref{s4lemma1} with $ P:=\overline{P}$ leads to
\begin{equation}\label{eq:New overline C}
|\Omega^- \setminus U_R| \leq  \Tilde{C} \left( \frac{R}{2} K \right)^{\frac{1}{2+s} }  [u]_{\partial G}^{\frac{1}{2+s} }.
\end{equation}
\\
\textbf{Case 2 -} $T$ is orthogonal to the boundary of $G$ at some point $Q$.

Again, in light of item (ii) of Lemma \ref{lem: a technical simple lemma}, we have that $B_R(Q) \subset U_R \cup \left[ \Omega \cap ( H^+ \cup T )\right]$.

We choose $\overline{P} := (-R/4 , Q_2, \dots, Q_n ) $ so that $- \overline{P}_1 = \mathrm{dist}(\overline{P},T) = R/4$. 

Using Lemma \ref{s3lemma1} with $B_R:= B_R(Q)$, for every $y = (y_1, Q_2, \dots, Q_n) \in B_{R/4}(\overline{P})$ we obtain that
%
%
\begin{equation*}
    \frac{v(\overline{P})}{( - \overline{P})_1} \leq K \,  \frac{v(y)}{ ( - y_1 ) } \leq K \,  [u]_{\partial G} ,
\end{equation*}
and hence
$$
v(\overline{P}) \le \frac{R}{4} K \,  [u]_{\partial G} .
$$
Again, we apply Lemma \ref{s4lemma1} with $P:= \overline{P}$, and we get that
\begin{equation*}
    |\Omega^- \setminus U_R| \leq  \Tilde{C} \, \left( \frac{R}{4} \, K \right)^{\frac{1}{2+s}} \,
    [u]_{\partial G}^{\frac{1}{2+s}}.
\end{equation*}

In all cases, \eqref{s4eq6} holds true with
$$\overline{C}:= \Tilde{C} \, \left( \max \left\lbrace \mathrm{diam}(\Omega),  \frac{R}{2} \, K \right\rbrace \right)^{\frac{1}{2+s}} .$$
This completes the proof.
%
%
\end{proof}

\section{Stability result} \label{sect_proofstability}

For the proof of the following lemma we closely follow \cite[Lemma 4.1]{ciraolo2018rigidity}. The idea is the following: for a given direction $e \in \mathcal{S}^{n-1}$ we slice the set $\Omega$ in a (finite number of) sections depending on the critical value $\lambda_e$, using the almost symmetry result in one direction of the previous section (Lemma \ref{s5lemma1}). This together with a simple observation on set reflections leads to an estimate on $\lambda_e = \mathrm{dist}(0,T^e)$.

\begin{lemma}
\label{s5lemma1}
Let $\varepsilon :=  \min \{ 1/4, 1/n \} \, | \Omega | / \overline{C}$ with $\overline{C}$ as in Lemma \ref{s4lemma2}. Assume that 
\begin{equation}\label{eq:smallness assumption}
	[u]_{\partial G}^{\frac{1}{s+2}} \leq \varepsilon
\end{equation}
 and suppose that the critical hyperplanes with respect to the coordinate directions $T^{e_j}$ coincide with $\{ x_j = 0 \}$ for every $j = 1, \dots , n$. For a fixed direction $e \in \mathbb{S}^{n-1}$ we have
\begin{equation}
\label{s5eq1}
    |\lambda_e| \leq \widehat{C} \, [u]_{\partial G}^{\frac{1}{s+2}}
\end{equation}
where $\widehat{C} = 4 \, (n+3) \, \frac{ \mathrm{diam}(\Omega)}{ | \Omega | } \, \overline{C} > 0$.
\end{lemma}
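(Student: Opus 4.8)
The strategy is to leverage the one-directional stability estimate of Lemma~\ref{s4lemma2} simultaneously along the $n$ coordinate directions in order to control the position of the critical hyperplane $T^e$ for an arbitrary direction $e$. The geometric idea, following \cite[Lemma 4.1]{ciraolo2018rigidity}, is that the critical hyperplane $T^e$ cannot be too far from the origin, because otherwise the reflected cap $Q^e(\widehat\Omega)$ would be forced to ``stick out'' of $\Omega$ by a definite amount, contradicting the smallness of $|\Omega\setminus Q^e(\Omega)|$ that Lemma~\ref{s4lemma2} guarantees (under the smallness assumption~\eqref{eq:smallness assumption}).

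\textbf{Key steps.} First I would record the consequence of Lemma~\ref{s4lemma2} combined with the hypothesis that the coordinate critical hyperplanes pass through the origin: namely $|\Omega\setminus Q^{e_j}(\Omega)|\le\overline C\,[u]_{\partial G}^{1/(s+2)}$ for each $j=1,\dots,n$, and likewise, since $e$ is arbitrary in the statement of Lemma~\ref{s4lemma2}, the bound $|\Omega\setminus Q^e(\Omega)|\le\overline C\,[u]_{\partial G}^{1/(s+2)}$ holds for the fixed direction $e$ as well (after translating so that $T^e=\{x\cdot e=\lambda_e\}$). The smallness assumption~\eqref{eq:smallness assumption} together with $\varepsilon:=\min\{1/4,1/n\}\,|\Omega|/\overline C$ then ensures each such symmetric difference is at most $\min\{1/4,1/n\}\,|\Omega|$. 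Second, I would use a slicing argument in the direction $e$: split $\Omega$ into slabs by hyperplanes orthogonal to $e$, using the critical value $\lambda_e$ and the $n$ coordinate half-symmetries to ``transport'' mass across $T^e$; the reflections in the coordinate directions, being near-symmetries of $\Omega$ up to the small error just bounded, let one show that a slab of $\Omega$ of width comparable to $|\lambda_e|$ near $T^e$ must be reflected essentially into $\Omega$ itself, so that the reflected copy overlaps $\Omega$ in a set of measure $\ge |\Omega| - (n+\text{const})\cdot\text{error}$. Third, comparing this with the direct bound on $|\Omega\setminus Q^e(\Omega)|$ forces $|\lambda_e|$ to be small; quantitatively, writing $|\Omega\triangle Q^e(\Omega)|\ge c\,|\lambda_e|\cdot(\text{cross-sectional area})\ge c\,|\lambda_e|\,|\Omega|/\mathrm{diam}(\Omega)$ on one hand, and $\le$ (sum of $n+\text{O}(1)$ coordinate errors) $\le (n+3)\,\overline C\,[u]_{\partial G}^{1/(s+2)}$ on the other, I would solve for $|\lambda_e|$ to obtain~\eqref{s5eq1} with $\widehat C=4\,(n+3)\,\mathrm{diam}(\Omega)\,\overline C/|\Omega|$.

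\textbf{Main obstacle.} The delicate point is the slicing/reflection bookkeeping: one must carefully iterate the coordinate near-symmetries to move a slab of width $\sim|\lambda_e|$ across $T^e$ while keeping track that each application of a coordinate reflection $Q^{e_j}$ only loses measure $\le\overline C\,[u]_{\partial G}^{1/(s+2)}$, so that after at most $n$ such steps plus the reflection $Q^e$ itself the accumulated loss is $\le(n+3)\,\overline C\,[u]_{\partial G}^{1/(s+2)}$; and one must verify that the ``gain'' term is genuinely of order $|\lambda_e|\,|\Omega|/\mathrm{diam}(\Omega)$, i.e.\ that the portion of $\Omega$ lying between $T^e$ and its mirror image under the composed reflections has cross-section bounded below. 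The smallness assumption~\eqref{eq:smallness assumption} is exactly what is needed to guarantee that $\Omega$ and its reflections overlap at all (their symmetric difference being $<|\Omega|$), which is what makes the geometric argument close. This is essentially a quantitative version of the classical observation that a set symmetric in $n$ linearly independent directions through points near the origin must have its center of symmetry near the origin in every direction.
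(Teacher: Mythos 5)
Your overall architecture coincides with the paper's (both follow \cite[Lemma 4.1]{ciraolo2018rigidity}): apply Lemma~\ref{s4lemma2} in the $n$ coordinate directions to control $|\Omega\,\triangle\,\Omega^0|$ for the antipodal image $\Omega^0=-\Omega$, apply it once more in the direction $e$, use the smallness assumption to guarantee that the relevant sets overlap, and convert the measure bounds into a bound on $\lambda_e$. The problem is that the decisive quantitative step is both misstated and left unproved. The inequality you write, $|\Omega\triangle Q^e(\Omega)|\ge c\,|\lambda_e|\,|\Omega|/\mathrm{diam}(\Omega)$, is false as stated: $Q^e$ is the reflection about the critical hyperplane itself, so the left-hand side vanishes for any set symmetric about $T^e$, no matter how large $\lambda_e$ is. The coordinate symmetries, which are what pin the origin down, must enter the \emph{lower} bound, not only the upper bound; the quantity that genuinely admits both bounds is $|\Omega\triangle Q^e(\Omega^0)|$ (equivalently, in the paper's bookkeeping, the measure of the central slab $\Omega\cap\{|x\cdot e|\le\lambda_e\}$, which is shown to be at most $(n+2)\,\overline{C}\,[u]_{\partial G}^{1/(s+2)}$). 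Note also that $|\Omega\triangle Q^e(\Omega)|\le\overline{C}\,[u]_{\partial G}^{1/(s+2)}$ comes directly from one application of Lemma~\ref{s4lemma2}, not from a ``sum of coordinate errors''.

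The ``main obstacle'' you flag --- that the cross-section relevant to the gain term is bounded below by $|\Omega|/\mathrm{diam}(\Omega)$ --- is exactly the step you do not supply, and it is where the paper uses a structural input your plan never invokes: the moving-plane inclusion $Q^e(\Omega_{\tilde\lambda})\subset\Omega$ for $\tilde\lambda\ge\lambda_e$ forces the sections $\Omega\cap T_\mu$ to be nested and decreasing for $\mu\ge\lambda_e$. The paper first bounds the central slab as above, reflects it to bound the adjacent slab $m_1=|\Omega\cap\{\lambda_e\le x\cdot e\le3\lambda_e\}|\le(n+3)\,\overline{C}\,[u]_{\partial G}^{1/(s+2)}$, and then uses the monotonicity to get $m_k\le m_1$ for every $k$, whence $|\Omega|/4\le|\Omega_{\lambda_e}|\le k_0\,m_1$ with $k_0\le\tfrac12(\Lambda_e/\lambda_e+1)$ and $\Lambda_e\le\mathrm{diam}(\Omega)$; this is what produces the \emph{linear} dependence of $\lambda_e$ on the error. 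Without this monotonicity (or a substitute, e.g.\ projecting onto the $e$-axis and using that $Q^e\circ(-\mathrm{id})$ acts as a translation by $2\lambda_e$ on the projected density), the naive slab-by-slab transport you describe only shows that each slab has measure controlled by the number of slabs times the error, which degrades the final estimate. So the plan is the right one, but the step that actually closes the argument is missing and the inequality meant to encode it is incorrect as written.
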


\begin{proof}
We set $\Omega^0 := \{ -x \ | \ x \in \Omega \}$. Since $\Omega^0$ can be obtained via composition of the $n$ reflections with respect to the hyperplanes $T^{e_j}$ for $j = \{1, \dots, n \}$, by applying Lemma \ref{s4lemma2} $n$ times with respect to the coordinate directions we obtain
\begin{equation}
\label{s5eq2}
 | \Omega \, \triangle \, \Omega^0 | \leq n \, \overline{C} \, [u]_{\partial G}^{\frac{1}{s+2}},   
\end{equation}
where we define the symmetric difference between two sets $A$ and $B$ as $A \, \triangle \, B := ( A \setminus B ) \cup ( B \setminus A)$. Indeed, we first notice that
\begin{equation*}
    | \Omega \, \triangle \, \Omega^0 | = 2 \, | \Omega \setminus \Omega^0 |.
\end{equation*}
Moreover, we have that
\begin{equation*}
    |\Omega \setminus \Omega^0| \leq | \Omega \setminus Q^n (Q^{n-1}( \dots (Q^1 (\Omega)) \dots ) | \leq |\Omega \setminus Q^n(\Omega)| + |Q^n(\Omega) \setminus Q^n (Q^{n-1}( \dots (Q^1 (\Omega)) \dots ) |, 
\end{equation*}
where $Q^j = Q^{e_j}$ the reflection with respect to the critical value in the coordinate direction $e_j$, for $j$ from $1$ to $n$. Now observing that
\begin{equation*}
    |Q^n(\Omega) \setminus Q^n (Q^{n-1}( \dots (Q^1 (\Omega)) \dots )| = |Q^n \big( \, \Omega \setminus (Q^{n-1}( \dots (Q^1 (\Omega)) \dots ) \, \big) |,
\end{equation*}
using the estimate in Lemma \ref{s4lemma2} and iterating the argument we obtain \eqref{s5eq2}.

\medskip
Now, assume $\lambda_e > 0$. 

We notice that $\Lambda_e \leq \mathrm{diam}(\Omega)$. In fact, if $\Lambda_e > \mathrm{diam}(\Omega)$, then $x \cdot e \ge 0$ for every $x \in \Omega$, and hence
$$
|\Omega \Delta \Omega^0 |= 2 |\Omega|.
$$
By using the last identity with \eqref{s5eq2}, we would find 
$$
2|\Omega| \le n \overline{C } \, [u]_{\partial G}^{\frac{1}{s+2}} ,
$$
which contradicts \eqref{eq:smallness assumption}.
 
Now let $\Omega' = Q^e (\Omega)$ be the reflection of $\Omega$ about the critical hyperplane $T^e$. Using Lemma \ref{s4lemma2} in the direction $e$ we get
\begin{equation}
\label{s5eq3}
    |\Omega \, \triangle \, \Omega'| \leq \overline{C} \, [u]_{\partial G}^{\frac{1}{s+2}}.
\end{equation}

Recalling that $\mathcal{E}_\lambda = \{ x \cdot e > \lambda \}$ and $\Omega_\lambda = \Omega \cap \mathcal{E}_\lambda$, from \eqref{s5eq3} we get
\begin{equation}
\label{s5eq4}
    |\Omega_{\lambda_e}| \geq \frac{|\Omega|}{2} - \overline{C} \, [u]_{\partial G}^{\frac{1}{s+2}}. 
\end{equation}

Moreover, if we set $\mathcal{E}_{\lambda}^0 := \{-x \ | \ x \in \mathcal{E}_{\lambda} \}$ we also have
\begin{equation*}
    |\Omega \cap \mathcal{E}_{\lambda_e}^0| = |\Omega^0 \cap \mathcal{E}_{\lambda_e}|  \geq |\Omega_{\lambda_e}| - |\Omega \,  \triangle \, \Omega^0| \geq \frac{|\Omega|}{2} - (n+1) \, \overline{C} \, [u]_{\partial G}^{\frac{1}{s+2}},
\end{equation*}
which together with \eqref{s5eq4} gives
\begin{equation}
\label{s5eq6}
| \, \{ x \in \Omega \ | \ - \lambda_e \leq x \cdot e \leq \lambda_e \} \,| \leq (n+2) \, \overline{C} \, [u]_{\partial G}^{\frac{1}{s+2}}.
\end{equation}

Since $\{ \lambda_e \leq x \cdot e \leq 3\lambda_e \}$ is mapped into $\{ |x \cdot e| \leq \lambda_e \}$ by the reflection with respect to $T_e$, using again \eqref{s5eq2} and \eqref{s5eq6} we get
\begin{align*}
| \, \{ x \in \Omega \, | \, \lambda_e < x \cdot e < 3 \lambda_e \} \, | &\leq | \, \{ x \in \Omega' \, | \, |x \cdot e| \leq \lambda_e \} \, | \leq \\
&\leq | \, \{ x \in \Omega \, | \, |x \cdot e| \leq \lambda_e \} \, | + | \, \Omega \, \triangle \, \Omega' \, | \leq (n+3) \, \overline{C} \, [u]_{\partial G}^{\frac{1}{s+2}}.
\end{align*}

Now let $m_k := | \, \{ x \in \Omega \ | \ (2k-1)\lambda_e \leq x \cdot e \leq (2k + 1) \lambda_e \} \, |$ with $k \geq 1$. By the moving plane procedure the set  $\Omega \cap T_\mu$ (seen as a subset in $\mathbb{R}^{n-1}$) is included  in $\Omega \cap T_{\mu'}$, for every $\lambda_e \leq \mu' \leq \mu$. Therefore, $m_k$ is a decreasing sequence and for every $k \geq 1$
\begin{equation*}
    m_k \leq m_1 \leq (n+3) \, \overline{C} \, [u]_{\partial G}^{\frac{1}{s+2}}. 
\end{equation*}

Now letting $k_0$ be the smallest natural number such that $(2k_0 + 1) \, \lambda_e \geq \Lambda_e$ we get
\begin{equation*}
    |\Omega_{\lambda_e}| = |\Omega \cap \{ \lambda_e \leq x \cdot e \leq \Lambda_e \}| \leq \sum_{k=1}^{k_0} m_k \leq \frac{1}{2} \bigg( \frac{\Lambda_e}{\lambda_e} + 1 \bigg) (n+3) \, \overline{C} \, [u]_{\partial G}^{\frac{1}{s+2}}
\end{equation*}
and therefore
\begin{equation*}
    |\Omega_{\lambda_e}| \, \lambda_e \leq (n+3) \, \mathrm{diam}(\Omega) \, \overline{C} \, [u]_{\partial G}^{\frac{1}{s+2}}.
\end{equation*}
In light of \eqref{s5eq4} and \eqref{eq:smallness assumption}, we have that $|\Omega_{\lambda_e}| \geq |\Omega|/4$, and \eqref{s5eq1} follows.
\end{proof}

We are now ready to complete the proof of the stability result in Theorem \ref{theorem2}.

\begin{proof}[Proof of Theorem \ref{theorem2}]
Up to a translation we can assume that the critical hyperplanes $T^{e_j}$ with respect to the $n$ coordinate directions intersect at the origin. We choose $\varepsilon > 0$ as in the proof of Lemma \ref{s5lemma1}.

Let
\begin{equation*}
    \rho_{min} := \min_{x \in \partial \Omega}|x|, \qquad \rho_{max} := \max_{x \in \partial \Omega} |x|
\end{equation*}
and $x, y \in \partial \Omega$ such that $|x| = \rho_{min}$ and $|y| = \rho_{max}$. Notice that, if $x=y$, then $\Omega$ is a ball, and the theorem trivially holds true. Thus, we assume $x \neq y$ and consider the unit vector
\begin{equation*}
    e = \frac{x - y}{|x - y|}
\end{equation*}
and the corresponding critical hyperplane $T^e$. The method of moving planes tells us that
\begin{equation}
\label{s5eq13}
    \mathrm{dist}(x,T_e) \geq \mathrm{dist}(y,T_e).
\end{equation}
Indeed, since $x = y - t e$ with $t = |x - y|$, the critical position can be reached at most when $y'$ coincides with $x$, which corresponds to the case in \eqref{s5eq13} where we have equality, while in every other case a strict inequality holds. Therefore we get
\begin{equation}
\label{s5eq14}
    \rho_{max} - \rho_{min} = |y| - |x| \leq 2 \, \mathrm{dist}(0,T_e) = 2|\lambda_e|.
\end{equation}
Clearly, $\rho (\Omega) \leq \rho_{max} - \rho_{min}$. This, together with \eqref{s5eq14} and Lemma \ref{s5lemma1} gives \eqref{s1eq8} with $C_* = 2 \widehat{C}$, if \eqref{eq:smallness assumption} holds true. On the other hand, if \eqref{eq:smallness assumption} does not hold, that is, if
$$
[ u ]_{\partial G}>\varepsilon ,
$$
then it is trivial to check that
$$
\rho(\Omega) \le \mathrm{diam}(\Omega) \le 
\left[ \frac{ \mathrm{diam}(\Omega)}{\varepsilon^{\frac{1}{s+2}}} \right] [ u ]_{\partial G}^{\frac{1}{s+2}} , 
$$
which is \eqref{s1eq8} with $C_* = \mathrm{diam}(\Omega) / \varepsilon^{1/(s+2)}$.

That is, \eqref{s1eq8} always holds true with
$$
C_*= \max \left\lbrace 2  \widehat{C} ,  \frac{ \mathrm{diam}(\Omega)}{\varepsilon^{\frac{1}{s+2}}}  \right\rbrace .
$$
As usual, the dependence on $| \Omega |$ appearing in $\widehat{C}$ and $\varepsilon$ can be removed by using \eqref{eq:trivial bounds volume}.
\end{proof}

\section{On the dependence of~$C_*$ in Theorem~\ref{theorem2}
on the diameter of~$\Omega$}\label{1A197}

A natural question is whether or not the quantitative stability result in Theorem~\ref{theorem2}
holds true with a constant~$C_*$ which is independent of the diameter of~$\Omega$.

We show with an explicit example that this is not possible.
The example is interesting in itself since it shows an ``approximate bubbling''
for remote balls. More specifically, we take~$L>10$, to be taken as large as we wish in what follows
and~$G:=B_{1/4}(-Le_1)\cup B_{1/4}(Le_1)$. We also take~$R:=3/4$
in~\eqref{IIGR}. In this way, we have that
\[ \Omega=B_1(-Le_1)\cup B_1(Le_1),\]
namely the domain is the union of two balls of unit radius located at mutual large distance.

We take~$u$ to be the corresponding torsion function as defined in~\eqref{s1eq4}.
Let also~$v$ be the solution of
\begin{equation}
\begin{cases}
(-\Delta)^s v = 1 \quad & \textmd{in} \ B_1(-Le_1),\\
v = 0 \quad &\textmd{in} \  \mathbb{R}^n \setminus B_1(-Le_1),
\end{cases}
\end{equation}
which we know to be radial.

We define~$ w:=u-v$ and we point out that
\begin{equation*}
\begin{cases}
(-\Delta)^s w = 0 \quad & \textmd{in} \ B_1(-Le_1),\\
w = u \quad &\textmd{in} \ B_1(Le_1),\\
w = 0 \quad &\textmd{in} \  \mathbb{R}^n \setminus \big(B_1(-Le_1)\cup B_1(Le_1)\big).
\end{cases}
\end{equation*}
{F}rom this and the fractional Schauder estimates in~\cite[Theorem 1.3]{MR3988080},
used here with~$k:=\ell:=0$, $f:=0$ and
$$\gamma:=\begin{cases}\displaystyle
\frac{11}{10} \quad & \textmd{if }\;\displaystyle s\not\in\left\{\frac9{20},\frac{19}{20}\right\},\\
\\ \displaystyle
\frac{13}{10} \quad & \textmd{if }\; \displaystyle s\in\left\{\frac9{20},\frac{19}{20}\right\},
\end{cases}$$
we conclude that
\begin{equation}\label{Pmreinwdoladf}
\begin{split}
\|w\|_{C^1(B_{1/2}(-Le_1))}&\le
C\int_{\mathbb{R}^n\setminus B_{1/2}(-Le_1)}\frac{|w(y)|}{|y|^{n+2s}}\,dy\\&\le
C\left[\|w\|_{L^\infty(B_1(-Le_1)\setminus B_{1/2}(-Le_1))}+
\int_{B_{1}(Le_1)}\frac{|u(y)|}{|y|^{n+2s}}\,dy\right]\\&
\le C\left[\|w\|_{L^\infty(B_1(-Le_1)\setminus B_{1/2}(-Le_1))}+
\frac{\|u\|_{L^\infty(\mathbb{R}^n)}}{L^{n+2s}}\right],
\end{split}
\end{equation}
with~$C > 0$ depending only on~$n$ and~$s$ (which we feel free to rename from line to line).

Also, using the fractional Poisson Kernel~$P$ of the ball~$B_1$
(see e.g.~\cite[Theorem~2.10]{bucur2015some}), we have that, for all~$x\in B_1$,
\begin{eqnarray*}&&
\left|w(x-Le_1)\right|=\left|
\int_{\mathbb{R}^n\setminus B_1} P(x,y) w(y-Le_1) dy
\right|\le C (1-|x|^2)^s
\int_{\mathbb{R}^n\setminus B_1} \frac{| w(y-Le_1)|}{(|y|^2-1)^s|x-y|^n} dy\\
&&\qquad=C (1-|x|^2)^s
\int_{ B_1(2Le_1)} \frac{| u(y-Le_1)|}{(|y|^2-1)^s|x-y|^n} dy
\le \frac{C\|u\|_{L^\infty(\mathbb{R}^n)}}{L^{n+2s}}.
\end{eqnarray*}
As a result,
$$ \|w\|_{L^\infty(B_1(-Le_1))}\le\frac{C\|u\|_{L^\infty(\mathbb{R}^n)}}{L^{n+2s}}.$$
{F}rom this and~\eqref{Pmreinwdoladf} we arrive at
\begin{equation}\label{Pmreinwdoladf1}
\|w\|_{C^1(B_{1/2}(-Le_1))}\le
\frac{C\|u\|_{L^\infty(\mathbb{R}^n)}}{L^{n+2s}}.
\end{equation}
Now we take~$\varphi\in C^\infty(\mathbb{R}^n,\,[0,1])$
such that~$\varphi=1$ in~$B_2(-Le_1)\cup B_2(Le_1)$
and~$\varphi=0$ outside~$B_3(-Le_1)\cup B_3(Le_1)$.
Thus, if~$x\in B_1(-Le_1)\cup B_1(Le_1)$,
\begin{eqnarray*}&&
\int_{\mathbb{R}^n} \frac{\varphi(x) - \varphi(z)}{|x-z|^{n+2s}} dz=
\int_{\mathbb{R}^n} \frac{1 - \varphi(z)}{|x-z|^{n+2s}} dz\ge
\int_{B_1((5-L)e_1)\cup B_1((L-5)e_1)} \frac{1 - \varphi(z)}{|x-z|^{n+2s}} dz\\&&\qquad
=\int_{B_1((5-L)e_1)\cup B_1((L-5)e_1)} \frac{1}{|x-z|^{n+2s}} dz\ge c,
\end{eqnarray*}
for some~$c>0$ depending only on~$n$ and~$s$.

Accordingly, we can take~$\psi:=C\varphi$ with $C$ large enough such
that~$(-\Delta)^s\psi\ge1$. Thus, by the maximum principle,
we deduce that~$u\le\psi$ and accordingly~$\|u\|_{L^\infty(\mathbb{R}^n)}\le C$.

Plugging this information into~\eqref{Pmreinwdoladf1} we conclude that
\begin{equation*}
\|w\|_{C^1(B_{1/2}(-Le_1))}\le
\frac{C}{L^{n+2s}}.
\end{equation*}
Since~$w$ is antisymmetric, this gives that
\begin{equation*}
\|w\|_{C^1(B_{1/2}(-Le_1)\cup B_{1/2}(Le_1))}\le
\frac{C}{L^{n+2s}}.
\end{equation*}
Consequently, for all~$x\ne y\in\partial B_{1/4}(-Le_1)$
(as well as for all~$x\ne y\in\partial B_{1/4}(Le_1)$),
\begin{eqnarray*}
&& \frac{|w(x) - w(y)|}{|x-y|}\le\frac{C}{L^{n+2s}}.
\end{eqnarray*}
Also, for all~$x\in\partial B_{1/4}(-Le_1)$ and~$ y\in\partial B_{1/4}(Le_1)$, we have that~$|x-y|\ge1$,
therefore
\begin{eqnarray*}
&& \frac{|w(x) - w(y)|}{|x-y|}\le |w(x)|+|w(y)|\le2\|w\|_{L^\infty(B_{1/2}(-Le_1)\cup B_{1/2}(Le_1))}
\le\frac{C}{L^{n+2s}}.
\end{eqnarray*}

As a result,
\begin{eqnarray*}&& [u]_{\partial G} \le [v]_{\partial B_{1/4}(-Le_1)\cup
\partial B_{1/4}(Le_1)} +[w]_{\partial B_{1/4}(-Le_1)\cup\partial B_{1/4}(Le_1)} \\&&\qquad
= 0+\sup_{x,y \in \partial B_{1/4}(-Le_1)\cup\partial B_{1/4}(Le_1), \, x \neq y}
\frac{|w(x) - w(y)|}{|x-y|}\le\frac{C}{L^{n+2s}}.
\end{eqnarray*}
Hence, if~\eqref{s1eq8} holded true with~$C_*$ independent of the diameter of~$\Omega$,
we would have that
$$ \rho (B_1(-Le_1)\cup B_1(Le_1)) \leq 
\frac{C}{L^\frac{n+2s}{s+2}}.$$
For this reason, there would exist~$p \in B_1(-Le_1)\cup B_1(Le_1)$
and~$t$, $s>0$ such that
$$ B_s(p)  \subset B_1(-Le_1)\cup B_1(Le_1) \subset B_t(p) $$ and
$$ |t-s|\le\frac{C}{L^\frac{n+2s}{s+2}}.$$
But necessarily~$s\le1$ and~$t\ge L$, from which a contradiction plainly follows when~$L$
is sufficiently large.

\section{Generalizations of Theorems \ref{theorem1} and \ref{theorem2}} \label{sect8}
In this section we briefly describe how Theorems \ref{theorem1} and \ref{theorem2} can be slightly generalized in the case $G$ has multiple connected components.

Let assume that $\Omega=G+B_R$, with $G$ an open bounded set with
\begin{equation} \label{G_II}
G=G_1 \cup \ldots \cup G_m \,,
\end{equation}
where $G_i$, $i=1,\ldots,m$, are the connected components of $G$ and they are such that
$$
(G_i + B_R) \cap (G_j + B_R) = \emptyset \quad \text{ for } i \neq j \,. 
$$
In this setting, the overdetermined condition \eqref{s1eq5} can be replaced by 
\begin{equation}\label{overdetII}
u=c_i \quad  \textmd{ on } \partial G_i
\end{equation} 
for some constants $c_i$, $i=1,\ldots,m$. We have the following generalization of Theorem \ref{theorem1}.
\begin{theorem}
\label{theorem1bis}
Let $G$ be as in \eqref{G_II} with $\partial G$ of class $C^1$ and set $\Omega := G + B_R$. There exists a solution $u \in C^s(\overline{\Omega})$ of \eqref{s1eq4} satisfying \eqref{overdetII}  
 if and only if $G$ (and therefore $\Omega$) is a ball.
\end{theorem}

\begin{proof}
The proof is completely analogous to the one of Theorem \ref{theorem1}. This is due to the fact that, when we apply the method of moving planes, by construction we have that the tangency point $P$ of Case 1 and its reflected $P'$ belong to the same connected component of $G$. It is clear that in Case 2 the same holds.
\end{proof}

We now discuss how to modify our argument for generalizing Theorem \ref{theorem2} in this setting. The main point is to change the definition of deficit. Indeed, in Theorem \ref{theorem2} we used the deficit 
\begin{equation*}
    [u]_{\partial G} := \sup_{x,y \in \partial G, \, x \neq y} \frac{|u(x) - u(y)|}{|x-y|} \,.
\end{equation*}
It is clear that $ [u]_{\partial G} \neq 0$ if $c_i \neq c_j$ for some $i$ and $j$ in \eqref{overdetII} and then $ [u]_{\partial G} $ cannot be used as a deficit in this setting. For this reason, we consider the deficit
\begin{equation} \label{def*}
    [u]_* := \sup_{i=1,\ldots,m}\  \sup_{x,y \in \partial G_i \atop x \neq y} \frac{|u(x) - u(y)|}{|x-y|}  \,.
\end{equation}
By using this deficit we can argue as done for Theorem \ref{theorem2} and obtain the following result. 

\begin{theorem}
\label{theorem2bis}
Let $G$ be as in \eqref{G_II} with $\partial G$ of class $C^1$ and let $\Omega := G + B_R$. Assume that $\partial \Omega$ is of class $C^2$. Let $u \in C^2(\Omega) \cap C(\mathbb{R}^n)$ be a solution of \eqref{s1eq4}. Then, we have that
\begin{equation}
\label{s1eq8bis}
\rho (\Omega) \leq C_* \, [u]_{*}^{\frac{1}{s +2}} ,
\end{equation}
where $[u]_{*}$ is given by \eqref{def*} and $C_* > 0$ is an explicit constant only depending on $n$, $s$, $R$, and the diameter $\mathrm{diam}(\Omega)$ of $\Omega$.
\end{theorem}

\begin{proof}
By using the remark noticed in the proof of Theorem \ref{theorem1bis}, the proof of the theorem is the same as the one of Theorem \ref{theorem2} and for this reason is omitted.
\end{proof}

\section*{Acknowledgments}

It is a pleasure to thank Jack Thompson for his useful
comments on a preliminary draft of this paper.

G. Ciraolo and L. Pollastro have been partially supported by the ``Gruppo Nazionale per l'Analisi Matematica, la Probabilit\`a e le loro Applicazioni'' (GNAMPA) of the ``Istituto Nazionale di Alta Matematica'' (INdAM, Italy). 

S. Dipierro, G. Poggesi and E. Valdinoci are members of AustMS. S.~Dipierro is supported by the Australian Research Council DECRA DE180100957 ``PDEs, free boundaries and applications''. G.~Poggesi is member of INdAM/GNAMPA. G.~Poggesi and E.~Valdinoci are supported by the Australian Laureate Fellowship FL190100081 “Minimal surfaces, free boundaries and partial differential equations”.

\begin{appendix}
\begin{center}
\section*{Appendices}
\end{center}

\section{Geometric remarks}\label{APP:A}

The following technical lemma has been used in the proof of Lemma~\ref{s4lemma2}.

\begin{lemma}\label{lem: a technical simple lemma}
The following relations hold true.
\begin{enumerate}[label=(\roman*)]
\item For any two open sets $A$ and $D$ in $\mathbb{R}^n$, we have that
	\begin{equation*}
		A + D = \overline{A} + D ,
	\end{equation*} 
where $\overline{A}$ is the closure of $A$.
\item In the notation introduced in~\eqref{NOTAZ}, for any point $x \in \overline{U_0} := \overline{ Q( G \cap H^+ )}$, we have that
\begin{equation*}
	B_R (x) \subset U_R \cup \left[ \Omega \cap ( H^+ \cup T )\right] .
\end{equation*}
\end{enumerate}	
\end{lemma}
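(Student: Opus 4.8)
The plan is to prove the two items in order, since item (i) is what makes the Minkowski sums in item (ii) behave well.

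\textbf{Item (i).} One inclusion is immediate from $A \subseteq \overline{A}$. For the reverse inclusion I would take $x = a + d$ with $a \in \overline{A}$ and $d \in D$, use the openness of $D$ to pick $\varepsilon > 0$ with $B_\varepsilon(d) \subseteq D$, approximate $a$ by some $\tilde a \in A$ with $|a - \tilde a| < \varepsilon$, and write $x = \tilde a + \big( d + (a - \tilde a) \big)$, observing that the second summand lies in $B_\varepsilon(d) \subseteq D$, so $x \in A + D$. (In fact only the openness of $D$ is used.) Applying this with $A := G$ and $D := B_R(0)$ gives $\Omega = G_R = G + B_R(0) = \overline{G} + B_R(0)$, which is the form I will use below.

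\textbf{Item (ii).} Since $Q$ is an isometry it commutes with closure, so every $x \in \overline{U_0} = \overline{Q(G \cap H^+)}$ can be written $x = Q(p)$ with $p \in \overline{G \cap H^+} \subseteq \overline{G} \cap \{ p_1 \ge 0 \}$; in particular $x_1 = -p_1 \le 0$. Fix $z \in B_R(x)$ and split according to the sign of $z_1$. If $z_1 \ge 0$, then $z_1 x_1 \le 0$, so the elementary inequality $(z_1 + x_1)^2 \le (z_1 - x_1)^2$ yields $|z - p|^2 = (z_1 + x_1)^2 + |z' - x'|^2 \le (z_1 - x_1)^2 + |z' - x'|^2 = |z - x|^2 < R^2$; since $p \in \overline{G}$, item (i) gives $z \in B_R(p) \subseteq \overline{G} + B_R(0) = \Omega$, and together with $z \in H^+ \cup T$ this shows $z \in \Omega \cap (H^+ \cup T)$. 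If instead $z_1 < 0$, then $Q(z) \in H^+$ and $|Q(z) - p| = |Q(z) - Q(x)| = |z - x| < R$, so $Q(z) \in B_R(p) \subseteq \overline{G} + B_R(0) = G_R$, whence $Q(z) \in G_R \cap H^+ = \widehat{G_R}$ and $z = Q(Q(z)) \in Q(\widehat{G_R}) = U_R$. Combining the two cases proves the asserted inclusion.

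I do not expect a real obstacle here; the argument is elementary. The only two points that need care are (a) passing from $\overline{G}$ back to $G$ inside the Minkowski sum, which is precisely why item (i) is recorded, and (b) checking that reflection across $T$ does not increase the distance to $p$ when $z$ lies on the opposite side of $T$ from $x$, which is the one-line computation $(z_1 + x_1)^2 \le (z_1 - x_1)^2$. Everything else is bookkeeping about which of the two sets $U_R$ or $\Omega \cap (H^+ \cup T)$ contains a given $z \in B_R(x)$.
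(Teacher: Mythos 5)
Your proof is correct. Item (i) is verbatim the paper's argument (approximate $a\in\overline{A}$ from within $A$ at scale given by the openness of $D$). For item (ii) you take a somewhat different, and in fact more explicit, route: the paper invokes item (i) with $A:=U_0$ and $D:=B_R$ to replace $\overline{U_0}+B_R$ by $U_0+B_R$ and then asserts the final inclusion in one line at the level of sets, whereas you apply item (i) with $A:=G$ to get $\overline{G}+B_R(0)=\Omega$, write $x=Q(p)$ with $p\in\overline{G}\cap\{p_1\ge 0\}$, and run a pointwise case split on the sign of $z_1$ for $z\in B_R(x)$. The two cases are handled by the same elementary mechanism: when $z$ lies on the side of $T$ opposite to $x$, reflecting the center shortens the distance ($4z_1x_1\le 0$ gives $|z-p|\le|z-x|<R$, hence $z\in B_R(p)\subset\Omega$ and $z\in H^+\cup T$), and when $z\in H^-$ the isometry $Q$ sends it into $B_R(p)\cap H^+\subset\widehat{G_R}$, i.e.\ $z\in U_R$. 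What your version buys is that the geometric content of the paper's terse concluding sentence is actually spelled out; what the paper's version buys is brevity by staying at the level of Minkowski sums. Both hinge on exactly the two points you flag: item (i) to discard the closure inside the Minkowski sum, and the reflection inequality across $T$.
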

\begin{proof}
(i) The inclusion $\subset$ is obvious. Let us prove $\supset$. For any $x \in \overline{A} + D $, we have that $x=a+d$, with $a\in \overline{A}$ and $d \in D$. Since $D$ is open, there exists $r_d>0$ such that $B_{r_d}(d)\subset D$. Since $a\in \overline{A}$, we can find $\underline{a} \in A$ such that $|\underline{a}-a| < r_d$. Now we notice that
$$
x=a+d= \underline{a} +(a - \underline{a} + d).
$$
Since the term in brackets belongs to $B_{r_d}(d) \subset D$ and $\underline{a} \in A$, we thus have proved that $x \in A + D  $.

(ii) For any $x \in \overline{U_0}$, we have that
$$
B_R(x) \subset \overline{U_0} + B_R(x)
$$	
by definition of $+$. An application of item (i) with $A:= U_0 $ and $D:= B_R(x)$ then gives that
$$
B_R(x) \subset U_0 + B_R(x) .
$$
The conclusion follows by noting that $U_0 \subset U_0 \cup \left[ G \cap (H^+ \cup T)\right]$.
\end{proof}

\section{Motivation for the overdetermined problem in~\eqref{s1eq4}
and~\eqref{s1eq5}:
the fair shape for an urban settlement}\label{URBE}

A classical topic in social sciences consists in the definition and understanding of the
complex transition zones (usually called ``fringes'') on the periphery of urban areas, see e.g.~\cite{101093sf472202}.
The rural-urban fringe problem aims therefore at detecting the transition
in land use and demographic characteristics lying between the continuously built-up areas of a central city
and the rural hinterland: this problem is of high social impact, also given the possible incomplete
penetration of urban utility services in fringes.

Though the analysis of fringes is still under an intense debate and several aspects, especially the ones related
to high commercial and financial pressures, are still to be considered controversial, a very simple model
could be to limit our analysis to one of the features usually attributed to fringes, namely that of {\em low density of
occupied dwellings}, and relate it to some of the characteristics that are considered inadequate for the fringe well-being
such as ``incomplete range
and incomplete network of utility services such
as reticulated water, electricity, gas and sewerage mains, fire hydrants'', etc.,
as well as ``accessibility of schools''~\cite{101093sf472202}.

One can also assume that {\em distance to urbanized areas} is a major factor to be accounted for in the analysis
of the above features since ``distance operates as a major constraint in
shaping and facilitating urban growth, and
the friction of space experienced by the rural-urban fringe is but a particular example of a
principle generally accepted in human ecology
and geography: the layout of a metropolis -- the assignment of activities to areas --
tends to be determined by a principle which
may be termed the minimizing of the cost of friction''~\cite{HAIG, 101093sf472202}.

In this spirit, one can consider a model in which the environment is described by a domain~$\Omega$ and
the density of population (or better to say the density
of occupied dwellings) is modeled by a function~$u$.
We assume that the population follows a nonlocal dispersal strategy modeled by the fractional Laplacian
(see e.g.~\cite{GIAC}) and that the environment is hostile (no dwelling possible outside the domain~$\Omega$,
with population ``killed'' if exiting the domain, corresponding to~$u=0$ outside~$\Omega$).

In this setting an equilibrium configuration for the population, subject to a growth modeled by a function~$f(x,u)$,
is described by the problem
\begin{equation}
\label{s1eq4:A}
\begin{cases}
(-\Delta)^s u(x) = f(x,u(x)) \quad & \textmd{for all } \ x\in \Omega,\\
u(x) = 0 \quad &\textmd{for all } \ x\in \mathbb{R}^n \setminus \Omega.
\end{cases}
\end{equation}
The case in which the birth and death rates of the population are negligible and the population is subject to
a constant immigration factor reduces~$f$ to a constant and therefore, up to a normalization, the problem in~\eqref{s1eq4:A}
boils down to that in~\eqref{s1eq4}.

One could also assume that there is a small quantity, say~$c>0$, that describes the density threshold
for an efficient network of utility services to develop: in this simplified model, the fringe is therefore
described by the area in which the values of~$u$ belong to the interval~$[0,c]$.

Clearly, the areas of major social hardship in this model would correspond to the points~$x$ of~$\Omega$
in the vicinity of the boundary and with~$u(x)\in[0,c]$. Assuming distance to facilities
to be the leading factor towards well-being in this simplified model, the ``fairest'' configurations
for the inhabitant of the fringe could be that in which the most remote areas are all at the same distance, say~$R$,
to the developed zone: one could therefore (at least for small~$c$ and correspondingly small~$R$)
adopt the setting in~\eqref{IIGR}.

In this framework, the above fairest condition would translate into the requirement that the density threshold~$\{u=c\}$
would coincide with~$\partial G$, leading naturally to the overdetermined condition in~\eqref{s1eq5}.

In this spirit (and with a good degree of approximation) the overdetermined problem in~\eqref{s1eq4}
and~\eqref{s1eq5} would correspond to that of a population in a hostile environment,
with negligible birth and death rate and a constant immigration factor, that adopts a nonlocal dispersal strategy
modeled by~$(-\Delta)^s$, which aims at optimizing the rural-urban fringe in terms of equal maximal density to the boundary
(the results presented here would give that the optimizer is given by a round city).
\end{appendix}

\bibliographystyle{alpha}
\bibliography{References}

\end{document}